\newcommand{\bbN}{\mathbb{N}}
\newcommand{\bbZ}{\mathbb{Z}}
\newcommand{\bbC}{\mathbb{C}}
\newcommand{\frakS}{\mathfrak{S}}
\newcommand{\calP}{\mathcal{P}}
\newcommand{\clambda}{\lambda^t} 
\renewcommand{\phi}{\varphi}
\newcommand{\sseq}{\subseteq}
\newcommand{\sm}{\smallsetminus}
\DeclareMathOperator{\fraksl}{\mathfrak{sl}}
\newcommand{\longest}{w_0^n}
\DeclareMathOperator{\dom}{\sf{dom}}
\DeclareMathOperator{\coInv}{coInv}
\DeclareMathOperator{\Inv}{Inv}
\DeclareMathOperator{\slide}{\sf{slide}}
\DeclareMathOperator{\swap}{\sf{swap}}
\newcommand{\row}{\mathsf{i}}
\newcommand{\col}{\mathsf{j}}
\newcommand{\north}{\mathsf{n}}
\newcommand{\east }{\mathsf{e}}
\newcommand{\south}{\mathsf{s}}
\newcommand{\west }{\mathsf{w}}
\DeclareMathOperator{\PD}{\sf{PD}} 
\DeclareMathOperator{\DeltaPD}{\Delta^\pi\sf{PD}}
\newenvironment{shift}[1][(0,0)]{
    \begin{scope}[shift={#1}]
}{
    \end{scope}
}
\newtheorem{theorem}{Theorem}[section]
\newtheorem{proposition}[theorem]{Proposition}
\newtheorem{lemma}[theorem]{Lemma}
\newtheorem{corollary}[theorem]{Corollary}
\theoremstyle{definition}
\newtheorem{definition}[theorem]{Definition}
\theoremstyle{remark}
\newtheorem*{remark}{Remark}
\definecolor{OSU_scarlet}{RGB}{186,12,47} 
\definecolor{OSU_gray40}{RGB}{100,106,110}
\definecolor{OSU_thun}{RGB}{65,182,230}
\definecolor{OSU_pink}{RGB}{251,99,126}
\newcommand{\alertcolor}{OSU_thun}
\newcommand{\shadecolor}{OSU_gray40}
\newcommand{\drawSquare}[3][]{
    \begin{pgfonlayer}{background}
    \draw[\shadecolor,thin]
    (#2-1/2,#3+1/2) rectangle (#2+1/2,#3-1/2);
    \end{pgfonlayer}
}
\newcommand{\drawShade}[3][]{
    \begin{pgfonlayer}{foreground}
    \fill[
        \shadecolor,
        opacity=0.5,
        pattern=north west lines
    ]
    (#2-1/2,#3+1/2) rectangle (#2+1/2,#3-1/2);
    \end{pgfonlayer}
}
\newcommand{\drawMark}[3][\markcolor]{
    \begin{pgfonlayer}{important}
    \draw[#1,very thick]
    (#2,#3) circle (1/2);
    \end{pgfonlayer}
}
\newcommand{\drawBox}[3][\shadecolor]{
    \begin{pgfonlayer}{background}
    \fill[#1,opacity=0.4]
    (#2-5/16,#3+5/16) rectangle +(5/8,-5/8);
    \end{pgfonlayer}
}
\newcommand{\drawHorizontal}[3][\pipecolor,\pipewidth]{
    \draw[#1]
    (#2-1/2,#3) to (#2+1/2,#3);
}
\newcommand{\drawVertical}[3][\pipecolor,\pipewidth]{
    \draw[#1]
    (#2,#3-1/2) to (#2,#3+1/2);
}
\newcommand{\drawRelbow}[3][\pipecolor,\pipewidth]{
    \draw[#1]
    (#2,#3-1/2) to (#2,#3-1/4) to[bend left] (#2+1/4,#3) to (#2+1/2,#3);
}
\newcommand{\drawJelbow}[3][\pipecolor,\pipewidth]{
    \draw[#1]
    (#2-1/2,#3) to (#2-1/4,#3) to[bend right] (#2,#3+1/4) to (#2,#3+1/2);
}
\newcommand{\drawCross}[3][\pipecolor,\pipewidth]{
    \drawHorizontal[#1]{#2}{#3}
    \drawVertical[#1]{#2}{#3}
}
\newcommand{\drawBump}[3][\pipecolor,\pipewidth]{
    \drawRelbow[#1]{#2}{#3}
    \drawJelbow[#1]{#2}{#3}
}
\newcommand{\drawPDTile}[3]{
    \ifnum #3=0 \drawBump{#1}{#2}  \fi
    \ifnum #3=1 \drawCross{#1}{#2} \fi
    \ifnum #3=2 \drawBox{#1}{#2} \drawBump{#1}{#2}  \fi
    \ifnum #3=3 \drawBox{#1}{#2} \drawCross{#1}{#2} \fi
    \ifnum #3=4 \drawBump{#1}{#2} \drawMark{#1}{#2} \fi
}
\newcommand{\drawPD}[2]{
    
    \PD@i = 1
    \PD@j = 1

    \foreach \t in {#2}{
        \drawPDTile{\number\PD@j}{-\number\PD@i}{\t}
        \drawSquare{\number\PD@j}{-\number\PD@i}
        
        \ifnum \numexpr\number\PD@i+\number\PD@j\relax = #1
            \global\advance \PD@i by 1
            \global\PD@j = 1
        \else
            \global\advance \PD@j by 1
        \fi
    }

    \foreach \i in {1,...,#1}{
        \drawJelbow{#1+1-\i}{-\i}
        \drawSquare{#1+1-\i}{-\i}
    }
}
\newcommand{\drawPDShade}[2]{
    \PD@i = 1
    \PD@j = 1
    \foreach \t in {#2}{
        \ifnum \t>0 \drawShade{\number\PD@j}{-\number\PD@i} \fi

        \begin{pgfonlayer}{foreground}
        \ifnum \t=2
            \draw[\shadecolor,very thick] (\number\PD@j-1/2,-\number\PD@i-1/2) -- ++(1,0);
        \fi
        \ifnum \t=3
            \draw[\shadecolor,very thick] (\number\PD@j+1/2,-\number\PD@i+1/2) -- ++(0,-1);
        \fi
        \ifnum \t=4
            \draw[\shadecolor,very thick] (\number\PD@j-1/2,-\number\PD@i-1/2) -- ++(1,0);
            \draw[\shadecolor,very thick] (\number\PD@j+1/2,-\number\PD@i+1/2) -- ++(0,-1);
        \fi
        \end{pgfonlayer}
        
        \ifnum \numexpr\number\PD@i+\number\PD@j\relax = #1
            \global\advance \PD@i by 1
            \global\PD@j = 1
        \else
            \global\advance \PD@j by 1
        \fi
    }
}
\newcommand{\drawAxes}[1]{
    \begin{pgfonlayer}{foreground}
    \draw[black,very thick]
    (1/2,-#1-1/2) -- (1/2,-1/2) -- (#1+1/2,-1/2);
    \end{pgfonlayer}
}
\newcommand{\drawNorthLabels}[3]{
    \PD@j = 0
    \foreach \j in {#3}{
        \node at (#2+\PD@j,-#1) {\j};
        \global\advance\PD@j by 1
    }
}
\newcommand{\drawWestLabels}[3]{
    \PD@i = 0
    \foreach \i in {#3}{
        \node at (#2,-#1-\PD@i) {\i};
        \global\advance\PD@i by 1
    }
}
\newcommand{\drawPipe}[4]{
    \foreach \i/\j in {#1}{
        \drawHorizontal[\alertcolor,thick,double]{\j}{-\i}
    }
    \foreach \i/\j in {#2}{
        \drawVertical[\alertcolor,thick,double]{\j}{-\i}
    }
    \foreach \i/\j in {#3}{
        \drawRelbow[\alertcolor,thick,double]{\j}{-\i}
    }
    \foreach \i/\j in {#4}{
        \drawJelbow[\alertcolor,thick,double]{\j}{-\i}
    }
}
\newcommand{\smalltilescale}{.22}
\newcommand{\bigtilescale}{.5}
\newcommand{\smalltilebaseline}{4.2pt}
\newcommand{\cross}[1][scale=\tilescale,baseline=\tilebaseline]{
    \begin{tikzpicture}[#1]
    \drawSquare11
    \drawCross11
    \end{tikzpicture}
}
\newcommand{\jelbow}[1][scale=\tilescale,baseline=\tilebaseline]{
    \begin{tikzpicture}[#1]
    \drawSquare11
    \drawJelbow11
    \end{tikzpicture}
}
\newcommand{\bump}[1][scale=\tilescale,baseline=\tilebaseline]{
    \begin{tikzpicture}[#1]
    \drawSquare11
    \drawBump11
    \end{tikzpicture}
}
\newcommand{\smallcross}{\cross[scale=\smalltilescale,baseline=\smalltilebaseline]}
\newcommand{\smallbump}{\bump[scale=\smalltilescale,baseline=\smalltilebaseline]}
\newcommand{\bigcross}{\cross[scale=\bigtilescale]}
\newcommand{\bigjelbow}{\jelbow[scale=\bigtilescale]}
\newcommand{\bigbump}{\bump[scale=\bigtilescale]}
\title{Derivatives of padded Schubert polynomials through pipe dreams}
\author{Hugh Dennin}
\thanks{The author was partially supported by NSF awards DMS-2231565 and DMS-1945212.}
\date{\today}
\begin{document}

\begin{abstract}
    In recent work, Hamaker, Pechenik, Speyer, and Weigandt showed that a certain differential operator $\nabla$ expands positively in the basis of Schubert polynomials.
    For $\pi$ a dominant permutation, Gaetz and Tung showed an analogous positivity result holds for a dual differential operator $\Delta$ acting on $\pi$-padded Schubert polynomials.
    In this paper, we provide a new proof of the result of Gaetz and Tung using the combinatorics of pipe dreams.
\end{abstract}

\maketitle

\section{Introduction}

Fix a partition $\lambda = (\lambda_1\geq \lambda_2\geq\cdots)$ and write $\pi = \dom(\lambda)$ for the dominant permutation with code $\lambda$.
The differential operators \[
    \Delta := \sum_{i=1}^\infty x_i\frac{\partial}{\partial y_i}
    \qquad\text{and}\qquad
    \nabla := \sum_{i=1}^\infty y_i\frac{\partial}{\partial x_i}
\] act on the subspace $V_\lambda$ of the polynomial ring $\bbZ[x_1,x_2,\dots;y_1,y_2,\dots]$ spanned by the monomials $x^\alpha y^\beta = \prod_{i=1}^\infty x_i^{\alpha_i}y_i^{\beta_i}$ satisfying $\alpha_i+\beta_i = \lambda_i$ for each $i\geq 1$.

When $w$ lies below $\pi$ in (left) weak order, the monomials appearing in the \textit{Schubert polynomial} $\frakS_w(x)\in \bbZ_{\geq0}[x_1,x_2,\dots]$ each divide $x^\lambda$.
Hence we may consider the homogenization $\frakS_w^\pi(x;y) := y^\lambda\frakS_w^\pi(\frac{x_1}{y_1},\frac{x_2}{y_2},\dots)\in V_\lambda$, called the \textit{$\pi$-padded Schubert polynomial} of $w$.

The goal of this paper is to provide a combinatorial proof of the following identity of Gaetz and Tung \cite{GaetzTung21} showing that $\Delta$ admits a positive expansion when acting on a $\pi$-padded Schubert polynomial:

\begin{theorem}[$\Delta$ identity \cite{GaetzTung21}]
\label{thm:delta pi}
Let $\pi = \dom(\lambda)$ be a dominant permutation.
For $w\leq_L \pi$, we have \[
    \Delta\frakS_w^\pi(x;y)
    = \sum_{w\lessdot_\pi t_{ab}w} 
    \left(1+|A(w,t_{ab}w)|+|B(w,t_{ab}w)|\right)
    \frakS_{t_{ab}w}^\pi(x;y).
\]
Here $w\lessdot_\pi t_{ab}w$ is shorthand for $w\lessdot t_{ab}w\leq_L \pi$ and \begin{align*}
    A(w,t_{ab}w)
    &:= \{k > b \mid w^{-1}(a) < w^{-1}(k) < w^{-1}(b)\}, \\
    B(w,t_{ab}w)
    &:= \{k > b \mid \pi w^{-1}(b) < \pi w^{-1}(k) < \pi w^{-1}(a)\}.
\end{align*}
\end{theorem}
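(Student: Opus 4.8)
The plan is to prove Theorem~\ref{thm:delta pi} by a weight-preserving bijection of pipe dreams. I first record the padded pipe-dream expansion of $\frakS^\pi_w$: writing $m_i(D):=\#\{j:(i,j)\in D\}$ for the number of crossings in row $i$ of a (reduced) pipe dream $D$, the hypothesis $w\leq_L\pi$ ensures $m_i(D)\leq\lambda_i$ for all $D\in\PD(w)$ and all $i$ (this is exactly the condition under which the padding is defined), so that substituting $x_i\mapsto x_i/y_i$ and multiplying by $y^\lambda$ is legitimate term by term and
\[
    \frakS^\pi_w(x;y)
    \;=\; y^\lambda\sum_{D\in\PD(w)}\ \prod_{(i,j)\in D}\frac{x_i}{y_i}
    \;=\; \sum_{D\in\PD(w)}\ \prod_{i\geq1} x_i^{m_i(D)}\,y_i^{\lambda_i-m_i(D)}.
\]
Differentiating term by term,
\[
    \Delta\frakS^\pi_w(x;y)
    \;=\; \sum_{D\in\PD(w)}\ \sum_{i\geq1}\bigl(\lambda_i-m_i(D)\bigr)\;
    x_i^{m_i(D)+1}y_i^{\lambda_i-m_i(D)-1}\!\!\prod_{i'\neq i} x_{i'}^{m_{i'}(D)}y_{i'}^{\lambda_{i'}-m_{i'}(D)},
\]
and this is the generating function, over a set $\DeltaPD(w)$ of ``$\Delta$-decorated'' pipe dreams -- a $D\in\PD(w)$ together with a choice of one of the $\lambda_i-m_i(D)$ admissible placements of a new crossing in some row $i$, thought of as turning a padding $y_i$-tile into an $x_i$-tile -- of the monomial obtained by performing that placement. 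Since each target padded Schubert polynomial is likewise a pipe-dream generating function, the theorem becomes the existence of a weight-preserving bijection
\[
    \DeltaPD(w)\ \xrightarrow{\ \sim\ }\ \bigsqcup_{w\,\lessdot_\pi\, v}\ \bigl\{1,\dots,1+|A(w,v)|+|B(w,v)|\bigr\}\times\PD(v).
\]
(Equivalently, after discarding the bookkeeping variables, the theorem is the Schubert-polynomial identity $\sum_{i\geq1}x_i\bigl(\lambda_i-x_i\partial_{x_i}\bigr)\frakS_w=\sum_{w\lessdot_\pi v}\bigl(1+|A(w,v)|+|B(w,v)|\bigr)\frakS_v$, and the same bijection proves it.)

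Constructing this bijection is the heart of the matter. Given a decorated pipe dream, I activate its marked placement to an honest crossing; tracing the two pipes through the new crossing isolates a pair $a<b$ with $w\lessdot t_{ab}w$, and one then transports the crossing to a canonical location by a short sequence of chute-, ladder- and droop-type moves, landing in a reduced pipe dream $D'$ of $v:=t_{ab}w$. The index in $\{1,\dots,1+|A|+|B|\}$ records which transport was needed: index $1$ is the case where the activated configuration is already a reduced pipe dream of $v$; the next $|A(w,v)|$ indices arise when repair requires sliding a crossing horizontally past a prescribed family of elbows, a family governed by the one-line notation of $w^{-1}$; and the final $|B(w,v)|$ indices when repair requires a vertical slide, governed instead by $\pi w^{-1}$ -- the asymmetry reflecting that a $\pi$-padded pipe dream is read relative to the diagram of $\pi$, so that its row labels encode $w^{-1}$ while its column labels encode $\pi w^{-1}$. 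That each output $v$ actually satisfies $v\leq_L\pi$, so that the index set matches the theorem, is checked along the way; bijectivity I would prove by giving the inverse fiberwise over a fixed $v$, locating the distinguished crossing of $D'\in\PD(v)$, sliding it back (for indices $\geq2$) into a double crossing, and deleting it to recover the decorated pipe dream.

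The step I expect to be the main obstacle is controlling the non-reduced intermediate configurations. Whenever the activated crossing is placed where its two pipes have already crossed in $D$, the configuration is a reduced pipe dream of nothing, and the transport moves must be arranged so as to repair every such configuration -- always terminating at a reduced pipe dream of $v$, never at a permutation of the wrong length -- while remaining injective. Simultaneously one must show that, for a fixed $D'\in\PD(v)$, the number of preimages is exactly $1+|A(w,v)|+|B(w,v)|$, uniformly over $D'$ rather than merely on average. I think the cleanest route is to carry out this count directly and fiberwise: for fixed $D'$, classify the crossings of $D'$ that can be undone -- possibly after a single auxiliary horizontal or vertical slide -- into a pipe dream of $w$, and match them with one distinguished crossing together with the index sets $A(w,v)$ and $B(w,v)$; this yields the bijection and the coefficient in a single argument.
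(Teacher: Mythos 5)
Your high-level plan is the same as the paper's: reinterpret $\Delta\frakS^\pi_w$ as a weighted sum over pipe dreams carrying a marked $y$-tile, then build a weight-preserving correspondence with $\bigsqcup_{w\lessdot_\pi v}\PD(v)$ whose fibers have the right sizes, splitting the preimages into a ``trivial'' part plus parts indexed by $A(w,v)$ and $B(w,v)$; your observation that the $A$-part is governed by $w^{-1}$ and the $B$-part by $\pi w^{-1}$ is also correct and matches the paper's decomposition. But there are two genuine gaps, and the second is the crux of the whole argument.

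First, your generating-function step leaves ``admissible placements'' undefined, and this is not a cosmetic omission. You write the padded Schubert polynomial as $\sum_D \prod_i x_i^{m_i(D)}y_i^{\lambda_i-m_i(D)}$, which is correct, but when you differentiate you get a sum over pairs $(D,i)$ weighted by $\lambda_i-m_i(D)$ --- a \emph{number}, not a set of positions. To ``activate a marked placement to an honest crossing'' you need to know exactly which $\lambda_i-m_i(D)$ bump tiles in row $i$ carry the $y$-weight, and for general dominant $\pi$ this is nontrivial. (For $\pi=w_0^n$ the answer is simply the bumps inside the staircase $\delta_n$, but that is the only easy case.) The paper's Definition~\ref{def:dominant pos} supplies the missing set: a position $p$ is $\pi$-dominated when $\row_p\leq\clambda_{\pi w^{-1}(\south_p)}$, and Proposition~\ref{prop:dominant pos} proves this choice gives exactly $\lambda_i$ positions per row and contains all crosses, so that $\frakS^\pi_w=\sum_P x^{P(\smallcross)}y^{P(\smallbump)\cap P(\pi)}$. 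Without this definition --- which depends on the pipe exiting south of $p$, not just on the row index --- your notion of ``decorated pipe dream'' does not yet make sense, and the rest of the proof cannot start.

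Second, the bijection itself is only sketched, and the sketch points at a strategy the paper deliberately avoids. You propose to first turn the marked bump into a crossing (possibly producing a non-reduced diagram) and then repair by ``chute-, ladder- and droop-type moves,'' acknowledging that taming the non-reduced intermediates is the main difficulty. The paper does not enter non-reduced territory at all: it partitions $\DeltaPD(w)=\calP_0\sqcup\calP_A\sqcup\calP_B$ by whether $(\west_p,\south_p)$ is in $\Inv(\pi w^{-1})$, is an inversion of $w^{-1}$ (i.e. $\south_p<\west_p$), or is in $\coInv(\pi w^{-1})$; pairs in $\calP_0$ convert directly to a crossing, while pairs in $\calP_A,\calP_B$ are first carried --- via $\swap$ and iterated $\phi_{*,k}=\swap\circ\slide$ moves that remain in $\PD(w)\times P(\bump)$ --- along a chain controlled by the notion of a ``$(*,k)$-aligned'' pair, until they land in $\calP_0$, and only then does the bump become a cross. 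Termination is proved by a strictly decreasing region $\Sigma_{*,k}$, and the fiber count $1+|A|+|B|$ comes from a uniqueness lemma giving a step-by-step inverse. None of this machinery (the partition by $\Inv(\pi w^{-1})$ versus $\coInv(\pi w^{-1})$ versus $\Inv(w^{-1})$, the alignment condition, the monotone $\Sigma$-argument) is present in your proposal, and without something equivalent there is no way to carry out the ``direct fiberwise count'' you describe. In short: correct aim and correct intuition about $w^{-1}$ versus $\pi w^{-1}$, but both the combinatorial definition underpinning the decorated-pipe-dream model and the actual moves of the bijection are missing.
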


\noindent
We remark that the special case of Theorem \ref{thm:delta pi} where $\pi = \longest = n\dots21$ is the longest permutation in $S_n$ was originally proved by Gaetz and Gao \cite{GaetzGao20a}.

To motivate Theorem \ref{thm:delta pi}, it is well-known that $\Delta$ and $\nabla$ act as the raising and lowering operators of a $\fraksl_2(\bbC)$-representation on $\bbC\otimes_\bbZ V_\lambda$.
In other words, letting $H$ denote the commutator $[\Delta,\nabla] = \Delta\nabla - \nabla\Delta$,
one can verify the relations \[
    [\Delta,\nabla] = H,
    \qquad
    [H,\Delta] = 2\Delta,
    \qquad
    [H,\nabla] = -2\nabla.
\]
An earlier result of Hamaker, Pechenik, Speyer, and Weigandt \cite{HamakerPechenikSpeyerWeigandt20} shows that $\nabla$ also expands positively when applied to a $\pi$-padded Schubert polynomial:

\begin{theorem}[$\nabla$ identity \cite{HamakerPechenikSpeyerWeigandt20}]
\label{thm:nabla}
For $w\in S_\infty$, we have \[
    \nabla\frakS^\pi_w(x;y) =
    \sum_{s_kw\lessdot_L w} k\cdot \frakS^\pi_{s_kw}(x;y).
\]
\end{theorem}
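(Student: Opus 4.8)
The plan is to argue with pipe dreams in the diagram of $\pi$. For $w\le_L\pi$ the monomials of $\frakS_w^\pi$ are recorded by the (reduced) pipe dreams $D$ of $w$ fitting inside the Young diagram of $\lambda=\code(\pi)$: a crossing tile in row $i$ contributes a factor $x_i$ and an elbow tile in row $i$ a factor $y_i$, so writing $\mathbf x^D$ for this monomial we have $\frakS_w^\pi=\sum_{D\in\PD_\pi(w)}\mathbf x^D$. Since $y_i\partial_{x_i}$ applied to $\mathbf x^D$ deletes a factor $x_i$ and inserts a factor $y_i$ --- equivalently, demotes a crossing of $D$ lying in row $i$ to an elbow --- we get, writing $D\setminus c$ for the filling obtained from $D$ by demoting the crossing at $c$,
\[
    \nabla\frakS_w^\pi=\sum_{D\in\PD_\pi(w)}\ \sum_{c\ \text{a crossing of}\ D}\mathbf x^{D\setminus c}.
\]
Every $s_kw$ appearing on the right-hand side of the theorem satisfies $s_kw\lessdot_L w\le_L\pi$, hence $s_kw\le_L\pi$ and $\frakS_{s_kw}^\pi$ is defined; the task is to reorganize the double sum above into $\sum_{s_kw\lessdot_L w}k\,\frakS_{s_kw}^\pi$.

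The main step is a local analysis of a single pair $(D,c)$. In $D\setminus c$ the two pipes that crossed at $c$ now bounce, so they exchange the portions of their routes lying past $c$; thus $D\setminus c$ is a pipe dream of the permutation obtained from $w$ by deleting the inversion carried by those two pipes. When their two exit values are consecutive, say $k$ and $k+1$, the deletion necessarily lowers the length, $D\setminus c$ is reduced, and its permutation is exactly the weak cover $s_kw\lessdot_L w$; otherwise $D\setminus c$ may be non-reduced, or a reduced pipe dream of some $v$ that is not a weak cover of $w$. I would then build a weight-preserving bijection $\Phi$ from $\{(D,c):D\in\PD_\pi(w),\ c\ \text{a crossing of}\ D\}$ onto $\{(D',k,i):s_kw\lessdot_L w,\ D'\in\PD_\pi(s_kw),\ 1\le i\le k\}$ with $\mathbf x^{D\setminus c}=\mathbf x^{D'}$: when $D\setminus c$ is already a reduced pipe dream of some $s_kw$ one outputs it and records the index $i$ from the position of $c$ among its $k$ admissible slots, and when $D\setminus c$ is non-reduced or misaligned one straightens it by a monomial-preserving local move along the two offending pipes that carries it into the correct $\PD_\pi(s_kw)$, reading $i$ off that move; a decorated version of the pipe-dream model (with boxed and marked elbow tiles) can be used to make this straightening canonical. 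Running $\Phi$ over all pairs then yields precisely $k$ copies of $\frakS_{s_kw}^\pi$ for each weak cover, as desired. This bijection is where I expect the real difficulty: one must control the non-reduced pipe dreams that appear when a demoted crossing joins two pipes with non-consecutive exit values, and show that their monomials reassemble exactly into the ``missing'' copies of the $\frakS_{s_kw}^\pi$ --- with no cancellation available, every such monomial must land on a weak cover.

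For a sanity check (and as an alternative if the bijection proves stubborn), the homogenization $\frakS_w^\pi(x;y)=y^\lambda\,\frakS_w(x/y)$ reduces the theorem to the identity $\sum_i\partial_{x_i}\frakS_w=\sum_{s_kw\lessdot_L w}k\,\frakS_{s_kw}$ for ordinary Schubert polynomials, re-homogenizing being legitimate since every $s_kw$ occurring is $\le_L\pi$. That identity admits an induction on $\ell(w)$: pick a right descent $i$ of $w$, note that the translation operator $\sum_j\partial_{x_j}$ commutes with the divided difference $\partial_{s_i}^{(x)}=(1-s_i)/(x_i-x_{i+1})$ --- both commute with $s_i$ and annihilate $x_i-x_{i+1}$ --- apply $\partial_{s_i}^{(x)}$ to both sides, use $\partial_{s_i}^{(x)}\frakS_u=\frakS_{us_i}$ (or $0$), and finish with a routine comparison of two index sets of simple reflections on weak order. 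Either route isolates the same combinatorial core: the ``non-covering'' deletions must conspire to complete each coefficient to exactly $k$.
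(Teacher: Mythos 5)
Your main route begins from a false formula. It is not true that every reduced pipe dream of $w\leq_L\pi$ has all of its crossings inside the Young diagram $D(\pi)$ of $\lambda$, so the sum over ``pipe dreams fitting inside the Young diagram of $\lambda$,'' with a $y_i$ for each unused diagram cell in row $i$, is not $\frakS^\pi_w(x;y)$. The paper's own Figure 4 is a counterexample: for $w=1432$, $\pi=3421$ (so $\lambda=(2,2,1)$), the pipe dream with crossings at $(1,3),(2,1),(3,1)$ contributes the term $x_1y_1x_2y_2x_3$, and its crossing $(1,3)$ lies outside $D(\pi)$; your set $\PD_\pi(w)$ omits this pipe dream, so your starting identity already fails here. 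This is exactly why Section 3 of the paper replaces the fixed diagram by the pipe-dream--dependent set $P(\pi)$ of $\pi$-dominated positions (Definition 3.1, Proposition 3.2). On top of this, the heart of your route 1 --- the weight-preserving map onto triples $(D',k,i)$, and in particular the ``straightening'' of demoted crossings that produce non-reduced diagrams or permutations that are not weak covers --- is only asserted, not constructed; that construction is the genuinely hard content (it is what the author's prior work \cite{Dennin25} supplies for the $\nabla$ identity), so route 1 is both unsound at its first step and incomplete at its key step.

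Your second route is sounder but stops short of a proof. The dehomogenization to $\sum_i\partial_{x_i}\frakS_w=\sum_{s_kw\lessdot_L w}k\,\frakS_{s_kw}$ is legitimate, and your commutation claim is correct: $\sum_j\partial_{x_j}$ is a derivation that kills $x_i-x_{i+1}$ and commutes with $s_i$, hence commutes with each divided difference. But this only reduces the padded statement to the unpadded Hamaker--Pechenik--Speyer--Weigandt identity, which is the entire content of Theorem \ref{thm:nabla} (note the paper does not prove this theorem at all; it cites \cite{HamakerPechenikSpeyerWeigandt20}, so there is no internal proof to compare against). Your induction outline defers exactly the combinatorial core: for $i$ a right descent of $w$ you must match the set of $k$ with $s_kw\lessdot_L w$ for which $i$ is still a right descent of $s_kw$ against the set of $k$ with $s_k(ws_i)\lessdot_L ws_i$, verify the coefficients agree cover by cover of $ws_i$; for $i$ not a right descent you must check the right-hand side is killed termwise by $\partial_i$; and you must then invoke that a polynomial in $\bbZ[x_1,x_2,\dots]$ annihilated by every $\partial_i$ is constant in order to upgrade ``equal after all $\partial_i$'' to equality. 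None of this is carried out, and the ``routine comparison of index sets'' is precisely where the coefficient $k$ is produced. As written, the proposal sketches plausible strategies but proves neither the pipe-dream model it relies on nor the identity itself.
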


\noindent
Together, Theorems \ref{thm:delta pi} and \ref{thm:nabla} show that the subspace with basis given by the $\pi$-padded Schubert polynomials $\frakS^\pi_w(x;y)$ for $w\leq_L \pi$ constitutes an $\fraksl_2(\bbC)$-subrepresentation of $\bbC\otimes_\bbZ V_\lambda$.
In particular, this implies that the interval $[e,\pi]_L$ in weak order satisfies the strong Sperner property \cite{GaetzTung21}.

The Schubert polynomial $\frakS_w(x)$ is known to enumerate a class of wiring diagrams associated to $w$ called \textit{pipe dreams} \cite{FominKirillov96,BergeronBilley93,KnutsonMiller05}, with each pipe dream $P$ contributing a monomial $x^{P(\smallcross)} = \prod_{(i,j)\in P(\smallcross)} x_i$ determined by the locations of the $\cross$ tiles in $P$.
In light of this, the left hand side of Theorem \ref{thm:nabla} enumerates pipe dreams for $w$ paired with a distinguished $\cross$ tile, with the content of the theorem implying that there exists a weight-preserving (many-to-one) correspondence between these pairs and pipe dreams for weak order covers $s_kw\lessdot w$.
Such a correspondence was provided by the author in \cite{Dennin25}.

In order to get an analogous combinatorial interpretation of Theorem \ref{thm:delta pi}, we provide a new pipe dream formula for $\frakS^\pi_w(x;y)$ where each pipe dream $P$ contributes a weight $x^{P(\smallcross)} y^{P(\smallbump)\cap P(\pi)}$ with the $y$-variables coming from a particular subset of the $\bump$ tiles of $P$ determined by $\pi$.
The left hand side of Theorem \ref{thm:delta pi} then enumerates pipe dreams for $w$ paired with a distinguished $\bump$ taken from this subset.
We then construct a weight-preserving correspondence $\Phi$ between these pairs and pipe dreams for permutations $w\lessdot_\pi t_{ab}w$, thus giving a combinatorial proof of Theorem \ref{thm:delta pi}.

The organization of this paper is as follows:
in Section 2, we cover the necessary background on permutations and pipe dreams.
In section 3, we provide the new pipe dream formula for $\pi$-padded Schubert polynomials.
Finally, in Section 4, we construct the correspondence $\Phi$ between the pipe dreams counted by both sides of Theorem \ref{thm:delta pi} and show that it has the desired properties.

\section{Background \& Definitions}

    
\subsection{Permutations, Bruhat Order, and Weak Order}
\label{sect:perm}
Let $S_n$ denote the collection of permutations of $\bbN := \{1,2,\dots\}$ with \textit{support} $\{i\geq 1\mid w(i)\neq i\}$ contained in $[n] := \{1,\dots,n\}$, and let $S_\infty := \bigcup_{n\geq 1} S_n$ denote the set of permutations of $\bbN$ with finite support.
We will frequently write permutations $w\in S_n$ using their \textit{one-line notation} $w(1)w(2)\dots w(n)$.

For $a,b\geq 1$ with $a<b$, write $t_{ab}\in S_\infty$ for the transposition that swaps $a$ and $b$.
$S_\infty$ is generated as a group by the simple transpositions $s_k := t_{k\,k+1}$ for $k\geq 1$.
The \textit{length} of a permutation $w\in S_\infty$ is the smallest number $\ell(w)\geq 0$ such that $w$ can be written as a product of $\ell(w)$ simple transpositions.

The \textit{Bruhat order} (or \textit{strong order}) on $S_\infty$ is the partial order $\leq$ obtained as the transitive closure of the cover relations $w\lessdot t_{ab}w$ whenever $\ell(t_{ab}w) = \ell(w)+1$.
The suborder $\leq_L$ of Bruhat order obtained by only considering covers of the form $w\lessdot s_kw$ where $\ell(s_kw) = \ell(w)+1$ is called the \textit{(left) weak order} on $S_n$.
For permutations $w\leq_L \pi$, we use the shorthand $w\lessdot_\pi t_{ab}w$ to denote that both $w\lessdot t_{ab}w$ and $t_{ab}w\leq_L \pi$.

The \textit{inversion set} of a permutation $w$ is the collection of pairs \[
    \Inv(w) = \{(i,j)\in \bbN^2  \mid i < j\text{ and }w(i) > w(j)\}.
\]
It is well-known that $\ell(w) = |\Inv(w)|$.
A useful transformation of the inversion set is the \textit{diagram} of $w$, given by
\begin{align*}
    D(w) &= \{(i,w(j)) \mid (i,j)\in \Inv(w)\} \\
    &= \{(i,j)\in \bbN^2  \mid i < w^{-1}(j)\text{ and }j < w(i)\}.
\end{align*}
We will also make use of \textit{co-inversion set} defined as \[
    \coInv(w) = \{(i,j)\in \bbN^2  \mid i<j\text{ and }w(i)<w(j)\}.
\]

The weak order on $S_\infty$ can be characterized in terms of inversion sets.
\begin{proposition}
\label{prop:weak order}
Let $w,u\in S_\infty$.
The following are equivalent:
\begin{itemize}
    \item[(i)] $w\leq_L u$,
    \item[(ii)] $\Inv(w)\sseq \Inv(u)$ (or equivalently $\coInv(u)\sseq \coInv(w)$),
    \item[(iii)] $\Inv(w^{-1})\sseq \coInv(uw^{-1})$ (or equivalently $\Inv(u w^{-1}) \sseq \coInv(w^{-1})$).
\end{itemize}
\end{proposition}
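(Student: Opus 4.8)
The plan is to establish the cycle of implications (i)$\Rightarrow$(ii)$\Rightarrow$(iii)$\Rightarrow$(i). The parenthetical reformulations inside (ii) and (iii) require no separate argument: for each $\sigma\in S_\infty$ the sets $\Inv(\sigma)$ and $\coInv(\sigma)$ partition $\{(i,j)\in\bbN^2 : i<j\}$, so within each item the two asserted inclusions are complementary and hence equivalent. For (i)$\Rightarrow$(ii): since $\leq_L$ is generated by its covers $w\lessdot s_kw$, it suffices to handle one cover, and there $\ell(s_kw)=\ell(w)+1$ forces $w^{-1}(k)<w^{-1}(k+1)$; passing from $w$ to $s_kw$ only reverses the relative order of the values $k$ and $k+1$, so a direct check gives $\Inv(s_kw)=\Inv(w)\cup\{(w^{-1}(k),w^{-1}(k+1))\}$, whence $\Inv$ grows monotonically along any $\leq_L$-chain. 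For (ii)$\Rightarrow$(iii): if some $(a,b)$ with $a<b$ lay in $\Inv(w^{-1})\cap\Inv(uw^{-1})$, then setting $i=w^{-1}(b)$ and $j=w^{-1}(a)$ we would get $i<j$ with $w(i)=b>a=w(j)$, so $(i,j)\in\Inv(w)\sseq\Inv(u)$ and $u(i)>u(j)$; but $(a,b)\in\Inv(uw^{-1})$ says $uw^{-1}(a)>uw^{-1}(b)$, i.e.\ $u(j)>u(i)$, a contradiction, so $\Inv(w^{-1})\cap\Inv(uw^{-1})=\es$.

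For (iii)$\Rightarrow$(i) the key ingredient is the length identity
\[
    \ell(vw)=\ell(v)+\ell(w)-2\,\bigl|\Inv(v)\cap\Inv(w^{-1})\bigr|
    \qquad(v,w\in S_\infty),
\]
proved by splitting $\Inv(vw)$ according to the sign of $w(i)-w(j)$: the co-inversions of $w$ that become inversions of $vw$ biject (via $(i,j)\mapsto(w(i),w(j))$) with $\Inv(v)\cap\coInv(w^{-1})$, the inversions of $w$ that survive in $vw$ biject (via $(i,j)\mapsto(w(j),w(i))$) with $\Inv(w^{-1})\sm\Inv(v)$, and adding the two counts and using $|\Inv(w^{-1})|=\ell(w)$ gives the formula. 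Taking $v=uw^{-1}$, condition (iii) is equivalent to $\Inv(uw^{-1})\cap\Inv(w^{-1})=\es$, hence to the length-additivity $\ell(u)=\ell(uw^{-1})+\ell(w)$. From this, concatenating a reduced word for $uw^{-1}$ with one for $w$ produces a reduced word for $u$; peeling the letters of the $uw^{-1}$-factor off the front one at a time, and using that every suffix of a reduced word is again reduced, yields a chain of weak order covers $w\lessdot s_{i_m}w\lessdot\cdots\lessdot u$, so $w\leq_L u$.

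I expect the only real obstacle to be the bookkeeping in this last step, namely the (routine but slightly fiddly) case analysis behind the length identity together with the appeal to the subword property of reduced words; the latter I would either cite from standard Coxeter-group theory or recover quickly from the strong exchange condition. Everything else amounts to short direct computations with inversion sets.
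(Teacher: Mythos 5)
The paper states Proposition~\ref{prop:weak order} as background without proof, treating it as a standard fact about weak order on $S_\infty$, so there is no in-paper argument to compare against. Your proof is correct and complete: the verification that $\Inv(s_kw)=\Inv(w)\sqcup\{(w^{-1}(k),w^{-1}(k+1))\}$ for a weak-order cover handles (i)$\Rightarrow$(ii); the contradiction argument for (ii)$\Rightarrow$(iii) is a clean direct computation; and the length identity $\ell(vw)=\ell(v)+\ell(w)-2|\Inv(v)\cap\Inv(w^{-1})|$ together with the reduced-word concatenation handles (iii)$\Rightarrow$(i). Your derivation of the length identity via the two bijections $(i,j)\mapsto(w(i),w(j))$ on $\coInv(w)$ and $(i,j)\mapsto(w(j),w(i))$ on $\Inv(w)$ is exactly the right bookkeeping, and the appeal to the fact that suffixes of a reduced word are reduced is standard and unobjectionable. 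The only stylistic remark is that the equivalence (i)$\Leftrightarrow$(ii) is the textbook characterization of (left) weak order (e.g.\ Bj\"orner--Brenti, Proposition~3.1.3), and the usual route to (iii) is to unwind (ii) directly rather than reprove everything from scratch; your self-contained argument via the length-additivity criterion $\ell(u)=\ell(uw^{-1})+\ell(w)$ is nonetheless a perfectly good and arguably more informative alternative, since it exhibits the chain of covers explicitly.
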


    
\subsection{Pipe Dreams}

In what follows, we will visualize the positive quadrant $\bbN^2$ using matrix coordinates, so $(i,j)\in \bbN^2$ will refer to the position $i$\textsuperscript{th} from the top and $j$\textsuperscript{th} from the left.
We will often denote positions $(i,j)$ using a single letter, e.g. $p$, opting to write $\row_p = i$ and $\col_p = j$ instead.

\begin{definition}[\cite{BergeronBilley93,KnutsonMiller05}]
    A \textit{pipe dream}\footnote{
        Also known as an \textit{RC-graph}.
    } $P$ is a finite subset of $\bbN^2$, which we interpret as a tiling of $\bbN^2$ using \textit{bump} and \textit{cross} tiles \[
        \bigbump \qquad \bigcross
    \] with $P$ giving the location of the $\cross$ tiles.
    The resulting diagram consists of countably many pipes winding from the north edge to the west edge.
    
    All pipe dreams we will encounter will be assumed to be \textit{reduced}, meaning that no two pipes may cross one another more than once.
\end{definition}

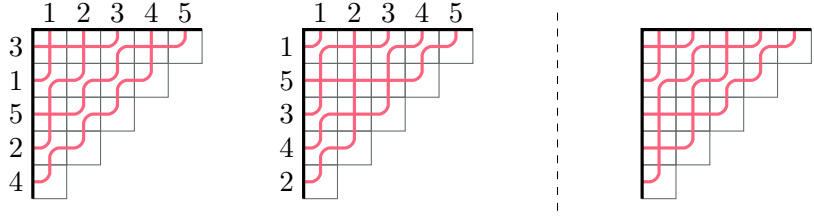
\begin{figure}
\begin{tikzpicture}[scale=0.45]
    \drawPD{5}{
        1,1,0,1,
        0,0,0,
        1,0,
        0
    }
    \drawAxes{5}
    \drawNorthLabels{0}{1}{1,2,3,4,5}
    \drawWestLabels{1}{0}{3,1,5,2,4}

    \begin{shift}[(8,0)]
    \drawPD{5}{
        0,1,0,0,
        1,1,1,
        0,1,
        0
    }
    \drawAxes{5}
    \drawNorthLabels{0}{1}{1,2,3,4,5}
    \drawWestLabels{1}{0}{1,5,3,4,2}
    \end{shift}
    
    \draw[black,dashed] (16,0) -- (16,-6);
    
    \begin{shift}[(18,0)]
    \drawPD{5}{
        1,0,1,0,
        0,0,0,
        1,1,
        1
    }
    \drawAxes{5}
    \end{shift}
\end{tikzpicture}
\caption{
    Left: Two (reduced) pipe dreams with permutations 31524 and 15342.
    Right: A non-reduced pipe dream.
}
\label{fig:pd examples}
\end{figure}

Given a pipe dream $P$, write (for emphasis) $P(\cross) = P$ and $P(\bump) = \bbN^2 \sm P$ for the sets of cross and bump tiles in $P$, respectively.
We label each pipe in $P$ with the index of the column it reaches along the north edge, and will speak of ``pipe $k$ in $P$'' to refer to the pipe in $P$ with label $k$.

For each position $p\in \bbN^2$, write $\north_{P,p}$ (resp. $\east_{P,p}$, $\south_{P,p}$, $\west_{P,p}$) for the label of the pipe exiting position $p$ in $P$ to the north (resp. east, south, west).
If $P$ is understood from context, we will often omit it and just write $\north_p$ (resp. $\east_p$, $\south_p$, $\west_p$).

If all the crossings in a pipe dream $P$ are among the first $n$ pipes, then $P(\cross)$ is contained in the \textit{staircase} $\delta_n := \{(i,j) \mid i+j\leq n\}$.
In this case, we will restrict our visualization of $P$ to $\delta_{n+1}$, drawing $P$ as normal in $\delta_n$ and replacing the $\bump$ tiles along $\delta_{n+1}\sm \delta_n$ with \textit{elbow} tiles \[
    \bigjelbow
\]

The \textit{permutation} of a pipe dream $P$ is the permutation $w \in S_\infty$ obtained by reading off the labels of the pipes as they appear along the west edge of the diagram---this gives $w$ in one-line notation (see Figure \ref{fig:pd examples}).
In other words, for each $a\geq 1$, pipe $a$ will connect column $a$ to row $w^{-1}(a)$.
Two pipes $a < b$ cross in $P$ if and only if $(a,b)\in \Inv(w^{-1})$ if and only if $(w^{-1}(b),w^{-1}(a)) \in \Inv(w)$.
Write $\PD(w)$ for the set of pipe dreams $P$ with permutation $w$.

We close this section with two moves that can be performed on pairs $(P,p)$ where $P\in \PD(w)$ and $p\in P(\bump)$.

\begin{definition}
\label{def:slide & swap}
Let $P\in \PD(w)$ and $p\in P(\bump)$.
\begin{itemize}
    \item
    The pair $(P,p)$ is \textit{slidable} if $w^{-1}(\west_p) > \row_q$.
    This means there is a position $q\in P(\bump)$ with $\row_q = \row_p$ and $\south_q = \west_p$.
    In this case, write $\slide(P,p) = (P,q)$.

    \item 
    The pair $(P,p)$ is \textit{swappable} if pipes $\west_p$ and $\south_p$ cross at some $q\in P(\cross)$.
    Equivalently, $(P,p)$ is swappable if either $\west_p > \south_p$ or $w^{-1}(\west_p) > w^{-1}(\south_p)$.
    In this case, write $\swap(P,p) = (Q,q)$ where $Q\in \PD(w)$ is the pipe dream obtained from $P$ by swapping the $\bump$ and $\cross$ at positions $p$ and $q$.
\end{itemize}
\end{definition}

See Figure \ref{fig:slide swap} for illustrations of $\slide(P,p)$ and $\swap(P,p)$.

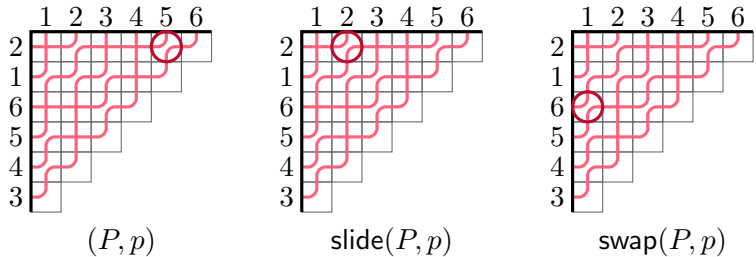
\begin{figure}
\begin{tikzpicture}[scale=0.4]
    \drawPD{6}{
        1,0,1,1,4,
        0,0,0,1,
        1,1,0,
        0,1,
        0
    }
    \drawAxes{6}
    \drawNorthLabels{0}{1}{1,2,3,4,5,6}
    \drawWestLabels{1}{0}{2,1,6,5,4,3}
    \node at (3.5,-7.5) {$(P,p)$};

    \begin{shift}[(9,0)]
    \drawPD{6}{
        1,4,1,1,0,
        0,0,0,1,
        1,1,0,
        0,1,
        0
    }
    \drawAxes{6}
    \drawNorthLabels{0}{1}{1,2,3,4,5,6}
    \drawWestLabels{1}{0}{2,1,6,5,4,3}
    \node at (3.5,-7.5) {$\slide(P,p)$};
    \end{shift}
    
    \begin{shift}[(18,0)]
    \drawPD{6}{
        1,0,1,1,0,
        0,0,0,1,
        4,1,0,
        0,1,
        0
    }
    \drawAxes{6}
    \drawNorthLabels{0}{1}{1,2,3,4,5,6}
    \drawWestLabels{1}{0}{2,1,6,5,4,3}
    \node at (3.5,-7.5) {$\swap(P,p)$};
    \end{shift}
\end{tikzpicture}
\caption{
    $\slide(P,p)$ and $\swap(P,p)$ for a pipe dream $P$ and a bump $p\in P(\bump)$.
}
\label{fig:slide swap}
\end{figure}

\subsection{Partitions and Dominant Permutations}

By a \textit{partition}, we mean a weakly decreasing sequence of nonnegative integers $\lambda = (\lambda_1\geq \lambda_2\geq \cdots)$ with $|\lambda| := \sum_{i=1}^\infty \lambda_i$ finite.
The combinatorics of partitions and permutations are related in many ways, one being the notion of a dominant permutation.

\begin{definition}
    \label{def:dominant}
    A permutation $\pi\in S_n$ is \textit{dominant} if it satisfies any of the following equivalent conditions:\footnote{
    See, say, \cite{Macdonald91,BergeronBilley93} for the equivalence of these conditions.
    }    
    \begin{enumerate}
    \item[(i)] $D(\pi) = \{(i,j)\in \bbN^2 \mid j\leq \lambda_i\}$ is the \textit{diagram} of a partition $\lambda$,
    \item[(ii)] $\pi$ has a unique pipe dream whose $\cross$ tiles are given by $D(\pi)$.
    \item[(iii)] $\pi$ contains no \textit{$132$-patterns} i.e. there is no triple of indices $a<b<c$ with $\pi(a)<\pi(c)<\pi(b)$.
    \end{enumerate}
    In this case, we write $\pi = \dom(\lambda)$. 
\end{definition}

If $\pi = \dom(\lambda)$ is dominant, then $\pi^{-1} = \dom(\clambda)$ is also dominant.
Here, $\clambda$ denotes the \textit{conjugate partition} to $\lambda$ given by $\clambda_j = |\{i\geq 1 \mid \lambda_i \geq j\}|$.
This follows since $D(\pi^{-1})$ is the transpose of $D(\pi)$.

We end this section with a lemma characterizing when covers in Bruhat order are subsumed by a dominant permutation in weak order.

\begin{proposition}
\label{prop:cover lemma}
Let $\pi\in S_\infty$ be dominant and assume both $w\leq_L \pi$ and $w \lessdot t_{ab}w$.
Then $t_{ab}w\leq_L \pi$ if and only if $(w^{-1}(a),w^{-1}(b))\in \Inv(\pi)$ (or equivalently, $(a,b)\in \Inv(\pi w^{-1})$).
\end{proposition}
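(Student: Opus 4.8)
The plan is to convert everything into inversion sets via Proposition~\ref{prop:weak order}(ii), so that the assertion $t_{ab}w\leq_L\pi$ becomes $\Inv(t_{ab}w)\sseq\Inv(\pi)$, to be compared against the standing hypothesis $\Inv(w)\sseq\Inv(\pi)$. First I would record that, since $w\lessdot t_{ab}w$ is length-increasing, the positions $p:=w^{-1}(a)$ and $q:=w^{-1}(b)$ satisfy $p<q$; consequently $(p,q)$ is an inversion of $t_{ab}w$, because the value at $p$ becomes $b$ and the value at $q$ becomes $a$. The forward direction is then immediate: if $t_{ab}w\leq_L\pi$ then $(w^{-1}(a),w^{-1}(b))=(p,q)\in\Inv(t_{ab}w)\sseq\Inv(\pi)$.

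For the converse, the key step is an explicit description of how the inversion set changes under the transposition. Setting $X=\{i<p\mid a<w(i)<b\}$ and $Y=\{j>q\mid a<w(j)<b\}$, I would check directly — by going through the pairs of positions meeting $\{p,q\}$, split according to whether the other coordinate lies left of $p$, strictly between $p$ and $q$, or right of $q$, and using the cover hypothesis (no value strictly between $a$ and $b$ sits at a position strictly between $p$ and $q$) to discard the pairs interior to $(p,q)$ — that
\[
    \Inv(t_{ab}w)\sseq\Inv(w)\cup\{(i,q)\mid i\in X\}\cup\{(p,j)\mid j\in Y\}\cup\{(p,q)\}.
\]
Assuming now $(p,q)\in\Inv(\pi)$ and using $\Inv(w)\sseq\Inv(\pi)$, it remains to place the two extra families inside $\Inv(\pi)$. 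For $i\in X$ we have $(i,p)\in\Inv(w)\sseq\Inv(\pi)$ (since $w(i)>a=w(p)$ and $i<p$), so $\pi(i)>\pi(p)>\pi(q)$ and $i<q$, hence $(i,q)\in\Inv(\pi)$; symmetrically, for $j\in Y$ we have $(q,j)\in\Inv(w)\sseq\Inv(\pi)$ (since $w(q)=b>w(j)$), so $\pi(p)>\pi(q)>\pi(j)$ and $(p,j)\in\Inv(\pi)$. Therefore $\Inv(t_{ab}w)\sseq\Inv(\pi)$, i.e. $t_{ab}w\leq_L\pi$. The equivalence of the two phrasings of the condition is a one-line check: $(w^{-1}(a),w^{-1}(b))=(p,q)\in\Inv(\pi)$ says $p<q$ and $\pi(p)>\pi(q)$, while $(a,b)\in\Inv(\pi w^{-1})$ says $a<b$ and $\pi(w^{-1}(a))>\pi(w^{-1}(b))$, i.e. $\pi(p)>\pi(q)$ — and both inequalities on positions hold by our standing assumptions.

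I expect the main obstacle to be the bookkeeping in the inversion-set computation: one must carefully enumerate the pairs $(i,j)$ with exactly one coordinate in $\{p,q\}$, track which of them change inversion status, and verify that the cover condition kills precisely the dangerous pairs interior to $(p,q)$, so that every ``new'' inversion of $t_{ab}w$ not already present in $w$ has the claimed form $(i,q)$, $(p,j)$, or $(p,q)$. I would also note that dominance of $\pi$ is not strictly needed for this argument — only $w\leq_L\pi$ together with transitivity of the order on the values of $\pi$ — so I will keep the hypothesis as stated merely because it is the setting of the paper; a proof tailored to dominant $\pi$ via the Young-diagram description of $D(\pi)$ is possible, but the inversion-set route above is shorter.
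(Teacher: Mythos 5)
Your proof is correct, and it takes a genuinely different route from the one the paper hints at. The paper leaves the proposition as an exercise and suggests handling the backward direction by exploiting the fact that $D(\pi)$ is the Young diagram of a partition when $\pi$ is dominant. You instead work directly with inversion sets via Proposition~\ref{prop:weak order}(ii): the forward direction is immediate from $(p,q)\in\Inv(t_{ab}w)$, and for the converse you describe the ``new'' inversions of $t_{ab}w$ explicitly. Your case analysis is right — with $p=w^{-1}(a)<q=w^{-1}(b)$ the cover hypothesis rules out positions strictly between $p$ and $q$ carrying values strictly between $a$ and $b$, so every inversion of $t_{ab}w$ not already in $\Inv(w)$ is of the form $(i,q)$ with $i\in X$, $(p,j)$ with $j\in Y$, or $(p,q)$ itself; and each of these lands in $\Inv(\pi)$ by chaining the known inversion $(i,p)$ or $(q,j)$ of $w$ (hence of $\pi$) through $(p,q)\in\Inv(\pi)$, using only transitivity of $>$ on the values of $\pi$. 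In particular your closing observation is genuinely interesting: this argument never uses dominance of $\pi$, so the backward direction — contrary to what the paper's remark seems to suggest — holds for an arbitrary $\pi\geq_L w$. That makes your proof both more elementary (no appeal to the Young-diagram structure of $D(\pi)$) and strictly more general; the diagram-based argument the paper has in mind buys nothing here beyond being ``native'' to the dominant setting the rest of the paper lives in.
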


\noindent
The proof is an exercise using Proposition \ref{prop:weak order} and is left to the interested reader.
The forwards implication does not require the dominant hypothesis.
For the backwards direction, one way to proceed is use the relationship between $\Inv(\pi)$ and $D(\pi)$, the latter being the diagram of a partition since $\pi$ is dominant.


\subsection{Schubert Polynomials}
\label{sect:polys}
Given a finite subset $S\sseq \bbN^2$, we associate the monomial \[
    x^S := \prod_{(i,j)\in S} x_i \in \bbZ[x_1,x_2,\dots].
\]

The \emph{Schubert polynomial} for $w$ is the generating function \[
    \frakS_w(x) :=
    \sum_{P\in \PD(w)} x^{P(\smallcross)}.
\] enumerating pipe dreams for $w$ \cite{FominKirillov96,BergeronBilley93}.
Schubert polynomials were first introduced by Lascoux and Schutzenberger \cite{LascouxSchuztenberger82a} to represent Schubert cycles in the cohomology ring of the complete flag variety.

If $\pi = \dom(\lambda)$ is a dominant permutation, then $\frakS_\pi(x)$ is the single monomial $x^\lambda = \prod_{i=1}^\infty x_i^{\lambda_i}$.
If $w\leq_L \pi$, then the monomials appearing in $\frakS_w(x)$ all divide $x^\lambda$ (e.g. by Theorem \ref{thm:nabla}).
Based on this, we can obtain the \emph{$\pi$-padded Schubert polynomial} \cite{GaetzGao20a,GaetzTung21} as the homogenization of $\frakS_w(x)$ given by \[
    \frakS^\pi_w(x;y) := y^\lambda \frakS_w(\mfrac{x_1}{y_1},\mfrac{x_2}{y_2},\dots).
\]
This is a homogenization in the sense that for each $i\geq 1$, the sum of the $x_i$-degree and $y_i$-degree of each monomial appearing in $\frakS^\pi_w(x;y)$ is $\lambda_i$.

\section{A pipe dream formula for $\frakS^\pi_w(x;y)$}

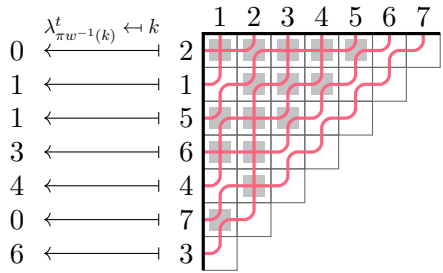
\begin{figure}
\begin{tikzpicture}[scale=0.45]
    \drawPD{7}{
        3,2,3,3,2,0,
        0,2,2,2,0,
        2,3,2,0,
        3,3,0,
        0,3,
        2
    }
    \drawAxes{7}
    \drawNorthLabels{0}{1}{1,2,3,4,5,6,7}
    
    \drawWestLabels {1}{0}{2,1,5,6,4,7,3}
    \draw node at (-2.5,-0.4) {
        \scalebox{0.7}{$\clambda_{\pi w^{-1}(k)}\mapsfrom k$}
    };
    \foreach \i in {1,...,7}{
        \draw[|->] (-0.8,-\i) -- (-4.2,-\i);
    }
    \drawWestLabels { 1}{-5}{0,1,1,3,4,0,6}
\end{tikzpicture}
\caption{
    The $\pi$-dominated positions of a pipe dream $P\in \PD(w)$ for $w = 2156473$ and $\pi = 645371$ ($\clambda = (6,4,3,1,1,0,0)$).
}
\label{fig:dominant pos}
\end{figure}

Consider the longest permutation $\longest := n\dots 21$ in $S_n$, which is the dominant permutation corresponding to the partition $\lambda = (n-1,\dots,2,1)$.
Given $w\in S_n$ (i.e. $w\leq_L \longest$), notice that the $\longest$-padded Schubert polynomial for $w$ can be expressed as
\[
    \frakS^{\longest}_w(x;y)
    = \sum_{P\in\PD(w)} x^{P(\smallcross)} y^{P(\smallbump) \cap \delta_n}.
\]
This follows since the staircase $\delta_n$ consists of $\lambda_i = n-i$ positions in each row $1\leq i\leq n$ and contains $P(\cross)$.
The benefit of the above formula is that we get a direct combinatorial interpretation of the $y$-variables appearing in $\frakS^{\longest}_w(x;y)$ as corresponding to specific $\bump$ tiles from the pipe dreams for $w$.
This will be useful later when examining the left hand side of Theorem \ref{thm:delta pi}.

The goal of this section is to provide an analogous pipe dream formula for $\frakS^\pi_w(x;y)$ for a general dominant permutation $\pi$.
Specifically, we will determine for each $P\in \PD(w)$ a distinguished subset $P(\pi)\sseq \bbN^2$ such that \begin{itemize}
    \item[(D.1)] $P(\pi)$ contains exactly $\lambda_i$ positions in each row $i\geq 1$.
    \item[(D.2)] $P(\pi)$ contains $P(\cross)$.
\end{itemize}
Given such an assignment, we can then write \begin{equation}
    \label{eq:padded schubert pipe dream formula}
    \frakS^\pi_w(x;y)
    = \sum_{P\in\PD(w)} x^{P(\smallcross)} y^{P(\smallbump)\cap P(\pi)}.
\end{equation}

\begin{definition}
    \label{def:dominant pos}
    Let $P\in \PD(w)$.
    A position $p$ in $P$ is \textit{dominated} by $\pi$ if $\row_p \leq \clambda_{\pi w^{-1}(\south_p)}$.
    Write $P(\pi)$ for the set of positions in $P$ dominated by $\pi$.
\end{definition}

\begin{remark}
Given indices $i,k\geq 1$, notice that \begin{align*}
    i \leq \clambda_{\pi w^{-1}(k)}
    \iff& \pi w^{-1}(k)\leq \lambda_i, \\
    \iff& (i,\pi w^{-1}(k))\in D(\pi),
    \tag{since $\pi$ is dominant} \\
    \iff& (i,w^{-1}(k))\in \Inv(\pi).
\end{align*}
We will frequently pass between these equivalences without comment.
\end{remark}

\begin{proposition}
\label{prop:dominant pos}
$P(\pi)$ satisfies \textup{(}D.1\textup{)} and \textup{(}D.2\textup{)}.
In particular, the pipe dream formula for $\frakS^\pi_w(x;y)$ given in Equation \ref{eq:padded schubert pipe dream formula} holds.
\end{proposition}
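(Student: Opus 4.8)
The plan is to verify properties (D.1) and (D.2) separately and then read off Equation~\ref{eq:padded schubert pipe dream formula} by a short bookkeeping step. The structural input I would use repeatedly is that in a pipe dream every pipe travels only south and west: a pipe entering a tile from its north or east edge leaves through the south or west edge, never the reverse. In particular a pipe has at most one vertical segment crossing the horizontal gridline just below row $i$, and pipe $c$ has such a segment exactly when it descends to row $i+1$, that is, when $w^{-1}(c) > i$. Thus $j \mapsto \south_{(i,j)}$ is a bijection from the columns $\bbN$ onto $\{c : w^{-1}(c) > i\} = \{w(j') : j' > i\}$, and if $\south_{(i,j)} = w(j')$ then $w^{-1}(\south_{(i,j)}) = j'$.

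For (D.1), I would argue as follows. By the Remark following Definition~\ref{def:dominant pos}, a cell $(i,j)$ lies in $P(\pi)$ iff $(i, w^{-1}(\south_{(i,j)})) \in \Inv(\pi)$. Writing $\south_{(i,j)} = w(j')$ with $j' > i$ via the bijection above, this condition becomes $(i,j') \in \Inv(\pi)$. Hence $|P(\pi) \cap (\{i\}\times\bbN)| = |\{j' > i : (i,j') \in \Inv(\pi)\}|$, and this equals $\lambda_i$: the $i$th row of $\Inv(\pi)$ has the same number of cells as the $i$th row of $D(\pi)$ (via $(a,b)\mapsto(a,\pi(b))$), and since $\pi = \dom(\lambda)$ we have $D(\pi) = \{(a,b):b\leq\lambda_a\}$, whose $i$th row has $\lambda_i$ cells.

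For (D.2), fix a cross $p = (i,j) \in P(\cross)$ and let $a = \south_p = \north_p$ be the pipe through $p$ running north--south and $b = \east_p = \west_p$ the pipe running east--west. Since $a$ leaves $p$ heading south it exits the west edge below row $i$, so $w^{-1}(a) > i$; since $b$ leaves heading west and then travels only west and south, $w^{-1}(b) \geq i$. Because $P$ is reduced, $a$ and $b$ cross only at $p$, which forces $a < b$ (the standard fact that at a crossing of a reduced pipe dream the vertically running pipe carries the smaller label; were $a$ to exit above $b$ the two pipes would be forced to cross a second time). Therefore $(a,b)\in\Inv(w^{-1})$, whence $w^{-1}(a)>w^{-1}(b)$ and $(w^{-1}(b),w^{-1}(a))\in\Inv(w)\subseteq\Inv(\pi)$, the inclusion because $w\leq_L\pi$ (Proposition~\ref{prop:weak order}). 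Put $s=w^{-1}(b)$ and $r=w^{-1}(a)$, so $i\leq s<r$ and $\pi(s)>\pi(r)$. If $s=i$ then $\pi(i)>\pi(r)$ outright; if $s>i$ then $\pi(i)>\pi(r)$ as well, since otherwise $i<s<r$ with $\pi(i)<\pi(r)<\pi(s)$ would be a $132$-pattern, contradicting dominance of $\pi$. Either way $(i,r)=(i,w^{-1}(\south_p))\in\Inv(\pi)$, so $p\in P(\pi)$ by the Remark, which is (D.2).

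Finally, granting (D.1) and (D.2), I would fix $P\in\PD(w)$ and put $c_i=|P(\cross)\cap(\{i\}\times\bbN)|$: in row $i$ the set $P(\pi)$ has $\lambda_i$ cells, the $c_i$ crosses of $P$ among them by (D.2) and the remaining $\lambda_i-c_i$ bumps, so $x^{P(\smallcross)}y^{P(\smallbump)\cap P(\pi)}=\prod_{i\geq1}x_i^{c_i}y_i^{\lambda_i-c_i}$, which is exactly the term contributed by $P$ to $y^\lambda\frakS_w(\tfrac{x_1}{y_1},\tfrac{x_2}{y_2},\dots)$; summing over $\PD(w)$ gives Equation~\ref{eq:padded schubert pipe dream formula}. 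I expect the main obstacle to be the reducedness step in (D.2) --- the inequality $a<b$ (equivalently $w^{-1}(a)>w^{-1}(b)$) at a cross --- since this is the only place where reducedness does essential work and the ``no second crossing'' justification must be stated with care (or quoted from the pipe-dream literature); everything else is bookkeeping plus the single $132$-pattern argument.
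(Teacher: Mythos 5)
Your proof is correct and takes essentially the same approach as the paper: translate the domination condition into membership in $\Inv(\pi)$ via the Remark, count row by row using the bijection between columns and pipes descending past row $i$ for (D.1), and combine $\Inv(w)\sseq\Inv(\pi)$ with dominance for (D.2). The one small difference is in (D.2), where the paper reaches $\row_p\leq\clambda_{\pi w^{-1}(\south_p)}$ directly via the chain $\row_p\leq w^{-1}(\west_p)\leq\clambda_{\pi w^{-1}(\south_p)}$ (the second step being the Remark applied with $i=w^{-1}(\west_p)$), whereas you take a correct but slightly longer detour through the $132$-pattern characterization of dominance.
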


\begin{proof}
Given a fixed pipe $k$ in $P$, for each $1\leq i< w^{-1}(k)$, notice that there is a unique position $p_{k,i}$ with $\row_{p_{k,i}} = i$ and $\south_{p_{k,i}} = k$, and these are the only positions in $P$ where the pipe exiting to the south is $k$.
With this notation, the set of positions in $P$ dominated by $\pi$ can be written as \begin{align*}
    P(\pi) &= \{p_{k,i} \mid k\geq 1, i\in [\clambda_{\pi w^{-1}(k)}]\}, \\
    &= \{p_{w\pi^{-1}(j),i} \mid j\geq 1, i\in [\clambda_j]\}, \\
    &= \{p_{w\pi^{-1}(j),i} \mid i\geq 1, j\in [\lambda_i]\}
\end{align*} with the first equality utilizing that we always have $w^{-1}(k) > \clambda_{\pi w^{-1}(k)}$ since $(w^{-1}(k),w^{-1}(k))\notin \Inv(\pi)$.
This shows (D.1).

For (D.2), suppose $p\in P(\cross)$ is given.
Write $k = \south_p$ and $a = \west_p$ for the pipes in $P$ crossing at $p$, so $k < a$.
Then $(w^{-1}(a),w^{-1}(k))\in \Inv(w) \sseq \Inv(\pi)$ implies $w^{-1}(a) \leq \clambda_{\pi w^{-1}(k)}$.
Since pipe $a$ exits $P$ at or below row $\row_p$, we also get $\row_p \leq w^{-1}(a)$.
\end{proof}

\begin{figure}
\begin{tikzpicture}[scale=0.45]
    \begin{shift}[(0,0)]
    \drawPD{4}{
        2,2,0,
        3,3,
        3
    }
    \drawAxes{4}
    \end{shift}
    
    \begin{shift}[(5,0)]
    \drawPD{4}{
        2,0,3,
        3,2,
        3
    }
    \drawAxes{4}
    \end{shift}
    
    \begin{shift}[(10,0)]
    \drawPD{4}{
        2,3,0,
        3,3,
        2
    }
    \drawAxes{4}
    \end{shift}
    
    \begin{shift}[(15,0)]
    \drawPD{4}{
        0,3,3,
        2,2,
        3
    }
    \drawAxes{4}
    \end{shift}
    
    \begin{shift}[(20,0)]
    \drawPD{4}{
        0,3,3,
        2,3,
        2
    }
    \drawAxes{4}
    \end{shift}
    
    \node at (12.5,-6) {$
        \frakS_{1432}^{3421}(x;y) =
        y_1^2x_2^2x_3 + x_1y_1x_2y_2x_3 + x_1y_1x_2^2y_3 + x_1^2y_2^2x_3 + x_1^2x_2y_2y_3
    $};
\end{tikzpicture}
\caption{
    Computation of $\frakS^\pi_w(x;y)$ using Equation \ref{eq:padded schubert pipe dream formula} for $w = 1432$ and $\pi = 3421$ ($\lambda^t = (3,2,0,0)$).
    The five pipe dreams in $\PD(1432)$ with their $3421$-dominated positions are drawn.
}
\label{fig:padded schub}
\end{figure}

\section{The Correspondence $\Phi$}

In light of the pipe dream formula for $\frakS^\pi_w(x;y)$
(Equation \ref{eq:padded schubert pipe dream formula}),
we can interpret the left-hand side of Theorem \ref{thm:delta pi} as a weighted enumeration of the set \[
    \DeltaPD(w) := 
    \left\{
        (P,p) : P\in \PD(w), p\in P(\bump)\cap P(\pi)
    \right\}
\] where each pair $(P,p)$ contributes $\frac{x_{\row_p}}{y_{\row_p}} x^{P(\smallcross)} y^{P(\smallbump)\cap P(\pi)}$.
Theorem \ref{thm:delta pi} would therefore follow from the existence of a map \[
    \Phi\colon \DeltaPD(w)
    \longrightarrow 
    \bigcup_{w\lessdot_\pi t_{ab}w} \PD(t_{ab}w)
\] such that, given $Q\in \PD(t_{ab}w)$ for some $w\lessdot_\pi t_{ab}w$, 
\begin{itemize}
    \item[($\Phi$.1)] $|\Phi^{-1}(Q)| = 1 + |A(w,t_{ab}w)| + |B(w,t_{ab}w)|$
    \item[($\Phi$.2)] $\frac{x_{\row_p}}{y_{\row_p}} x^{P(\smallcross)} y^{P(\smallbump)\cap P(\pi)}
    = x^{Q(\smallcross)} y^{Q(\smallbump)\cap P(\pi)}$ for each $(P,p)\in \Phi^{-1}(Q)$.
\end{itemize}
In this section, we will build such a map $\Phi$, thus giving a combinatorial proof of Theorem \ref{thm:delta pi}.

More precisely, consider the partition $\DeltaPD(w) = \calP_0\sqcup \calP_A\sqcup \calP_B$ given by \begin{align*}
    \calP_0 &:= \{
        (P,p)\in \DeltaPD(w) : 
        (\west_p,\south_p)\in \Inv(\pi w^{-1})
    \}, \\
    \calP_A &:= \{
        (P,p)\in \DeltaPD(w) : 
        \south_p < \west_p 
    \}, \\
    \calP_B &:= \{
        (P,p)\in \DeltaPD(w) : 
        (\west_p,\south_p)\in \coInv(\pi w^{-1})
    \}.
\end{align*}
We will define $\Phi$ by separately constructing its restriction $\Phi_*$ to each $\calP_*$ (for $*\in \{0,A,B\}$).
Given $w\lessdot_\pi t_{ab}w$ and $Q\in \PD(t_{ab}w)$, we will see that these restrictions satisfy the refinement of ($\Phi$.1) above:
\begin{itemize}
    \item[($\Phi$.1z)] $|\Phi_0^{-1}(Q)| = 1$ (i.e. $\Phi_0$ is a bijection),
    \item[($\Phi$.1a)] $|\Phi_A^{-1}(Q)| = |A(w,t_{ab}w)|$,
    \item[($\Phi$.1b)] $|\Phi_B^{-1}(Q)| = |B(w,t_{ab}w)|$.
\end{itemize}

\subsection{Construction of $\Phi$}

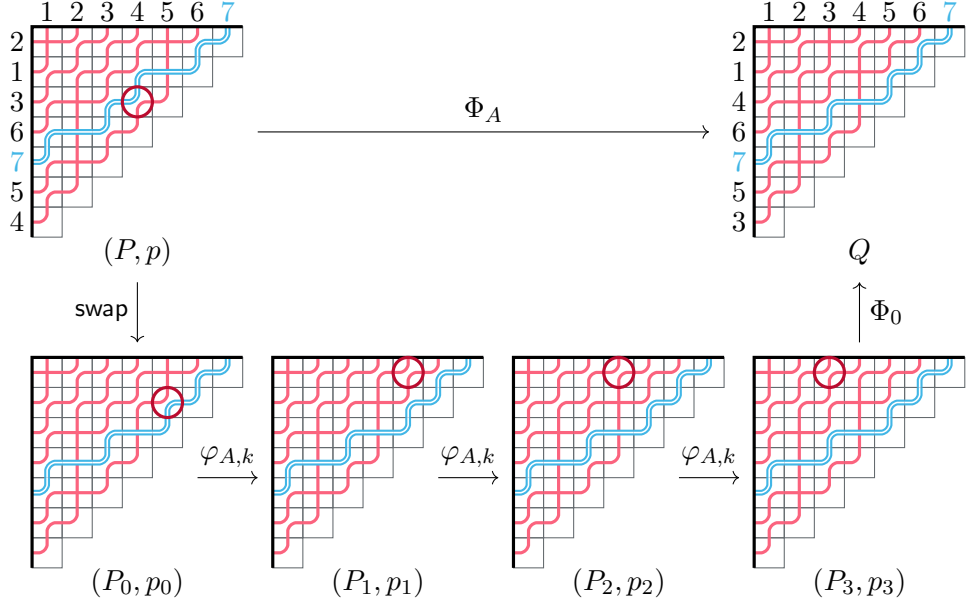
\begin{figure}
\begin{tikzpicture}[scale=0.4]
    \begin{shift}
    \drawPD{7}{
        1,0,0,0,1,0,
        0,0,0,0,1,
        0,1,0,4,
        0,1,0,
        0,1,
        0
    }
    \drawPipe 
    {2/5,4/2} 
    {} 
    {1/6,2/4,3/3,4/1} 
    {1/7,2/6,3/4,4/3,5/1} 
    \drawAxes{7}
    \drawNorthLabels{0}{1}{1,2,3,4,5,6,{\color{\alertcolor}7}}
    \drawWestLabels {1}{0}{2,1,3,6,{\color{\alertcolor}7},5,4}
    
    \node at (4,-8) {$(P,p)$};
    \draw[->] (4,-9) -- (4,-11)
    node[midway,left,scale=0.9] {$\swap$};
    \draw[->] (8,-4) -- (23,-4)
    node[midway,above] {$\Phi_A$};
    \end{shift}

    \begin{shift}[(0,-11)]
    \drawPD{7}{
        1,0,0,0,1,0,
        0,0,0,0,4,
        0,1,0,1,
        0,1,0,
        0,1,
        0
    }
    \drawPipe 
    {3/4,4/2} 
    {} 
    {1/6,2/5,3/3,4/1} 
    {1/7,2/6,3/5,4/3,5/1} 
    \drawAxes{7}

    \node at (4,-8) {$(P_0,p_0)$};
    \draw[->] (6,-4.5) -- (8,-4.5)
    node[midway,above] {$\phi_{A,k}$};
    \end{shift}

    \begin{shift}[(8,-11)]
    \drawPD{7}{
        1,0,0,0,4,0,
        0,0,0,1,0,
        0,1,0,1,
        0,1,0,
        0,1,
        0
    }
    \drawPipe 
    {3/4,4/2} 
    {} 
    {1/6,2/5,3/3,4/1} 
    {1/7,2/6,3/5,4/3,5/1} 
    \drawAxes{7}
    
    \node at (4,-8) {$(P_1,p_1)$};
    \draw[->] (6,-4.5) -- (8,-4.5)
    node[midway,above] {$\phi_{A,k}$};
    \end{shift}

    \begin{shift}[(16,-11)]
    \drawPD{7}{
        1,0,0,4,0,0,
        0,0,0,1,0,
        0,1,0,1,
        0,1,0,
        0,1,
        0
    }
    \drawPipe 
    {3/4,4/2} 
    {} 
    {1/6,2/5,3/3,4/1} 
    {1/7,2/6,3/5,4/3,5/1} 
    \drawAxes{7}

    \node at (4,-8) {$(P_2,p_2)$};
    \draw[->] (6,-4.5) -- (8,-4.5)
    node[midway,above] {$\phi_{A,k}$};
    \end{shift}

    \begin{shift}[(24,-11)]
    \drawPD{7}{
        1,0,4,0,0,0,
        0,0,0,1,0,
        0,1,0,1,
        0,1,0,
        0,1,
        0
    }
    \drawPipe 
    {3/4,4/2} 
    {} 
    {1/6,2/5,3/3,4/1} 
    {1/7,2/6,3/5,4/3,5/1} 
    \drawAxes{7}

    \node at (4,-8) {$(P_3,p_3)$};
    \draw[->] (4,0) -- (4,2)
    node[midway,right] {$\Phi_0$};
    \end{shift}

    \begin{shift}[(24,0)]
    \drawPD{7}{
        1,0,1,0,0,0,
        0,0,0,1,0,
        0,1,0,1,
        0,1,0,
        0,1,
        0
    }
    \drawPipe 
    {3/4,4/2} 
    {} 
    {1/6,2/5,3/3,4/1} 
    {1/7,2/6,3/5,4/3,5/1} 
    \drawAxes{7}
    \drawNorthLabels{0}{1}{1,2,3,4,5,6,{\color{\alertcolor}7}}
    \drawWestLabels {1}{0}{2,1,4,6,{\color{\alertcolor}7},5,3}

    \node at (4,-8) {$Q$};
    \end{shift}
\end{tikzpicture}
\caption{
    Computation of $\Phi_A(P,p)$ for a pair $(P,p)\in \calP_A$ ($w = 2136754$).
    Pipe $k = 7$ is emphasized in blue.
    The final pipe dream $Q$ has permutation $t_{34}w = 2146753$.
}
\label{fig:Phi_A}
\end{figure}

We first detail the bijection $\Phi_0$.
Consider a pair $(P,p)\in \calP_0$.
Then $(\west_p,\south_p)\in \Inv(\pi w^{-1})\sseq \coInv(w^{-1})$, so pipes $a = \west_p$ and $b = \south_p$ do not cross in $P$.
Replacing the $\bump$ at position $p$ in $P$ with a $\cross$ crosses pipes $a$ and $b$, so we obtain a (reduced) pipe dream $Q\in \PD(t_{ab}w)$.
Notice that $\ell(t_{ab}w) = |Q(\cross)| = |P(\cross)|+1 = \ell(w) + 1$, so $w\lessdot t_{ab}w$.
Furthermore, Proposition \ref{prop:cover lemma} guarantees that $w\lessdot_\pi t_{ab}w$.
We may therefore define 

\begin{definition}[The map $\Phi_0$]
\label{def:Phi_0}
Given $(P,p)\in \calP_0$, we set $\Phi_0(P,p) := Q$, the pipe dream obtained by replacing the $\bump$ at position $p$ with a $\cross$.
\end{definition}

We next construct both $\Phi_A$ and $\Phi_B$ in parallel.
The idea will be to develop an procedure that takes a pair appearing in $\calP_A$ or $\calP_B$ and reduces it to a pair in $\calP_0$, from which we can then apply $\Phi_0$.
For now, we focus only on the definition of $\Phi_A$ and $\Phi_B$, postponing the statements and proofs of certain results needed to show that these maps are well-defined until the next section.
We will need the following pieces of terminology:

\begin{definition}
Let $P\in \PD(w)$, $k\geq 1$, and $p\in P(\bump)$.
\begin{itemize}
\item
Say that $p$ is \textit{$(A,k)$-aligned} in $P$ if both
\begin{itemize}
    \item
    $p$ lies (weakly) northwest of pipe $k$,
    \item
    $(\west_p,k) \in \Inv(w^{-1})$.
\end{itemize}

\item 
Say that $p$ is \textit{$(B,k)$-aligned} in $P$ if each of
\begin{itemize}
    \item $p$ lies (weakly) northwest of pipe $k$,
    \item $(\west_p,k) \in \coInv(\pi w^{-1})$,
    \item $\row_p \leq \clambda_{\pi w^{-1}(k)}$ (or equivalently $(\row_p,w^{-1}(k))\in \Inv(\pi)$).
\end{itemize}
\end{itemize}
In the case $p$ is $(*,k)$-aligned in $P$ (where $*\in \{A,B\}$), we will also say that the pair $(P,p)$ is \textit{$(*,k)$-aligned}.
\end{definition}

In the sequel, let the wildcard $*$ denote either $A$ or $B$.
We next define an operator acting on $(*,k)$-aligned pairs which will be used as the fundamental building block of $\Phi_*$.

\begin{definition}
Let $(P,p)$ be a $(*,k)$-aligned pair ($*\in\{A,B\}$, $k\geq 1$). Define $\phi_{*,k}(P,p)$ to be the pair obtained as follows:
\begin{itemize}
    \item
    If $\slide(P,p)$ is both $(*,k)$-aligned and swappable, then set $\phi_{*,k}(P,p) := \swap(\slide(P,p))$.
    \item 
    Otherwise set $\phi_{*,k}(P,p) := \slide(P,p)$.
\end{itemize}
\end{definition}

\noindent
Note that for $\phi_{*,k}(P,p)$ to be well-defined, we need to know that $(*,k)$-aligned pairs are slidable; we will prove this in Lemma \ref{lem:aligned is slidable}.

Now, suppose we are given $(P,p)\in \calP_*$.
We can obtain both an index $k\geq 1$ and a $(*,k)$-aligned pair $(P_0,p_0)$ as follows:
if $(P,p)\in \calP_B$, then $(P_0,p_0) := (P,p)$ itself is $(B,k)$-aligned where $k = \south(P,p)$.
If $(P,p)\in \calP_A$, notice that pipes $\west_p$ and $\south_p$ must cross at a position $p_0\in P(\cross)$ with $\row_{p_0} < \row_p$.
Then $(P_0,p_0) := \swap(P,p)$ is $(A,k)$-aligned, where $k = \west_{P,p} = \south_{P_0,p_0}$.

By repeatedly applying $\phi_{*,k}$ to $(P_0,p_0)$, we obtain a chain of pairs \[
    (P_0,p_0)
    \overset{\phi_{*,k}}\longrightarrow
    (P_1,p_1)
    \overset{\phi_{*,k}}\longrightarrow
    \cdots
    \overset{\phi_{*,k}}\longrightarrow
    (P_m,p_m)
\] where each $(P_i,p_i)$ for $0\leq i\leq m-1$ is $(*,k)$-aligned and $(P_m,p_m)\in \calP_0$ is not $(*,k)$-aligned---this is the content of Corollary \ref{cor:phi terminates}.
We may thus make the definition 

\begin{definition}[The maps $\Phi_A$ and $\Phi_B$]
\label{def:Phi_A/B}
Given $(P,p)\in \calP_*$ (where $*\in \{A,B\}$), we set $\Phi_*(P,p) := \Phi_0(\phi_{*,k}^m(P_0,p_0))$ where
\begin{itemize}
    \item if $* = A$, then $k = \west_{P,p}$ and $(P_0,p_0) = \swap(P,p)$,
    \item if $* = B$, then $k = \south_{P,p}$ and $(P_0,p_0) = (P,p)$,
\end{itemize}
and $m\geq 1$ is taken so that $\phi_{*,k}^m(P_0,p_0)$ is not $(*,k)$-aligned.
\end{definition}

Figures \ref{fig:Phi_A} and \ref{fig:Phi_B} give example computations of $\Phi_A(P,p)$ and $\Phi_B(P,p)$.
In both examples, the pipe dreams $(P_0,p_0)$, $(P_1,p_1)$, and $(P_2,p_2)$ are $(*,k)$-aligned while $(P_3,p_3)$ is not.

\begin{figure}
\begin{tikzpicture}[scale=0.4]
    \begin{shift}[(0,0)]
    \drawPD{7}{
        1,0,0,0,1,0,
        0,1,1,0,0,
        0,1,0,4,
        1,1,0,
        1,1,
        0
    }
    \drawPipe 
    {5/1,5/2} 
    {} 
    {1/6,2/5,3/4,4/3} 
    {1/7,2/6,3/5,4/4,5/3} 
    \drawAxes{7}
    \draw[black,dashed,very thick] (0.5,-3.5) -- (5.5,-3.5);
    \drawNorthLabels{0}{1}{1,2,3,4,5,6,{\color{\alertcolor}7}}
    \drawWestLabels {1}{0}{2,1,6,5,{\color{\alertcolor}7},4,3}

    \node at (6,-8) {$(P,p) = (P_0,p_0)$};
    \draw[->] (4,-9) -- (4,-11)
    node [midway,left] {$\phi_{B,k}$};
    \draw[->] (8,-4) -- (17,-4)
    node[midway,above] {$\Phi_B$};
    \end{shift}

    \begin{shift}[(0,-11)]
    \drawPD{7}{
        1,0,0,0,1,0,
        0,1,1,0,0,
        0,1,4,0,
        1,1,0,
        1,1,
        0
    }
    \drawPipe 
    {5/1,5/2} 
    {} 
    {1/6,2/5,3/4,4/3} 
    {1/7,2/6,3/5,4/4,5/3} 
    \drawAxes{7}
    \draw[black,dashed,very thick] (0.5,-3.5) -- (5.5,-3.5);
    
    \node at (4,-8) {$(P_1,p_1)$};
    \draw[->] (6,-4.5) -- (9,-4.5)
    node[midway, above] {$\phi_{B,k}$};
    \end{shift}

    \begin{shift}[(9,-11)]
    \drawPD{7}{
        1,0,0,0,1,0,
        0,1,4,0,0,
        1,1,0,0,
        1,1,0,
        1,1,
        0
    }
    \drawPipe 
    {5/1,5/2} 
    {} 
    {1/6,2/5,3/4,4/3} 
    {1/7,2/6,3/5,4/4,5/3} 
    \drawAxes{7}
    \draw[black,dashed,very thick] (0.5,-3.5) -- (5.5,-3.5);

    \node at (4,-8) {$(P_2,p_2)$};
    \draw[->] (6,-4.5) -- (9,-4.5)
    node[midway, above] {$\phi_{B,k}$};
    \end{shift}

    \begin{shift}[(18,-11)]
    \drawPD{7}{
        1,0,0,0,1,0,
        4,1,0,0,0,
        1,1,0,0,
        1,1,0,
        1,1,
        0
    }
    \drawPipe 
    {5/1,5/2} 
    {} 
    {1/6,2/5,3/4,4/3} 
    {1/7,2/6,3/5,4/4,5/3} 
    \drawAxes{7}
    \draw[black,dashed,very thick] (0.5,-3.5) -- (5.5,-3.5);

    \node at (4,-8) {$(P_3,p_3)$};
    \draw[->] (4,0) -- (4,2)
    node[midway, right] {$\Phi_0$};
    \end{shift}

    \begin{shift}[(18,0)]
    \drawPD{7}{
        1,0,0,0,1,0,
        1,1,0,0,0,
        1,1,0,0,
        1,1,0,
        1,1,
        0
    }
    \drawPipe 
    {5/1,5/2} 
    {} 
    {1/6,2/5,3/4,4/3} 
    {1/7,2/6,3/5,4/4,5/3} 
    \drawAxes{7}
    \drawNorthLabels{0}{1}{1,2,3,4,5,6,{\color{\alertcolor}7}}
    \drawWestLabels {1}{0}{2,4,6,5,{\color{\alertcolor}7},1,3}
    
    \node at (4,-8) {$Q$};
    \end{shift}
\end{tikzpicture}
\caption{
    Computation of $\Phi_B(P,p) = Q$ for a pair $(P,p)\in \calP_B$ ($w = 2165743$, $\pi = 6573421$, $\lambda = 544221$).
    Pipe $k = 7$ is emphasized in blue, and the cutoff row $\clambda_{\pi w^{-1}(k)} = 3$ is indicated with a dashed line.
    The final pipe dream $Q$ has permutation $t_{14}w = 2465713$.
}
\label{fig:Phi_B}
\end{figure}
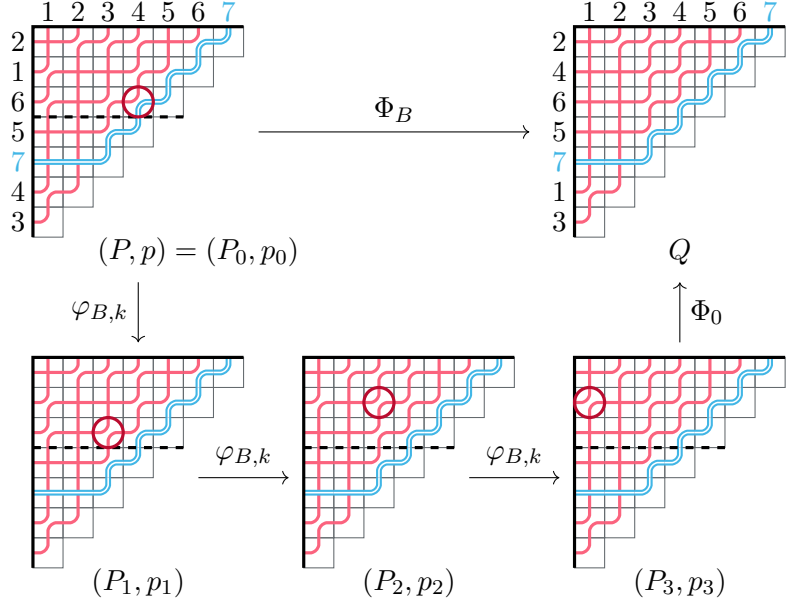

\begin{remark}
A priori, the sets $\calP_0$, $\calP_A$, and $\calP_B$ all depend on both $w$ and $\pi$, since the same is true for $\DeltaPD(w)$.
It turns out, in fact, that $\calP_A$ only depends on $w$, even though it is conditioned over $\DeltaPD(w)$.
This is because of the following proposition:
\begin{proposition}
\label{prop:P_A only cares about w}
    Let $P\in \PD(w)$ for some $w\in S_\infty$.
    If $p\in P(\bump)$ satisfies $\south_p < \west_p$, then $p\in P(\pi)$ for \textit{any} dominant $\pi \geq_L w$
\end{proposition}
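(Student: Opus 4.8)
The plan is to unwind the definition of "dominated" and reduce everything to the combinatorial characterization of weak order (Proposition~\ref{prop:weak order}). Recall that $p\in P(\pi)$ means $\row_p\leq \clambda_{\pi w^{-1}(\south_p)}$, which by the Remark following Definition~\ref{def:dominant pos} is equivalent to $(\row_p, w^{-1}(\south_p))\in \Inv(\pi)$. So writing $b=\south_p$ and $a=\west_p$, I need to show that whenever $b<a$ and $p$ is a bump of $P$ exiting south through pipe $b$ and west through pipe $a$, then $(\row_p, w^{-1}(b))\in \Inv(\pi)$ for every dominant $\pi\geq_L w$.

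First I would locate $\row_p$ relative to the pipes $a$ and $b$. Since $p\in P(\bump)$ with $\west_p=a$ and $\south_p=b$, pipe $a$ passes through $p$ horizontally and pipe $b$ passes through vertically; pipe $b$ continues downward and exits the west edge at row $w^{-1}(b)$, so $\row_p < w^{-1}(b)$. Pipe $a$ enters $p$ from the east, meaning it has already turned and is heading west; it exits at row $w^{-1}(a)$, and since it occupies row $\row_p$ at column $\col_p$ and only moves west/south afterward, we get $\row_p \leq w^{-1}(a)$. Thus $\row_p \leq w^{-1}(a) $ and $\row_p < w^{-1}(b)$.

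Next, the key input: since $b < a$ but pipe $b$ exits south of pipe $a$'s position — more precisely, I claim $(w^{-1}(a), w^{-1}(b))\in \Inv(w)$, i.e. $w^{-1}(a) < w^{-1}(b)$ and $w(w^{-1}(a)) = a > b = w(w^{-1}(b))$. The inequality $w^{-1}(a)<w^{-1}(b)$ holds because pipe $a$ turns west before reaching the row of pipe $b$ (this is exactly the condition that $\south_p<\west_p$ forces a crossing structure — indeed $(P,p)\in\calP_A$ is swappable, so pipes $a$ and $b$ cross somewhere above $p$, forcing $w^{-1}(a)<w^{-1}(b)$). Therefore $(w^{-1}(a),w^{-1}(b))\in\Inv(w)\subseteq \Inv(\pi)$ by Proposition~\ref{prop:weak order}(ii) applied to $w\leq_L\pi$. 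This gives $w^{-1}(a)<w^{-1}(b)$ and $\pi(w^{-1}(a))>\pi(w^{-1}(b))$. Now I combine: $(\row_p, w^{-1}(b))$ will be an inversion of $\pi$ provided $\row_p \leq w^{-1}(a)$ and $\pi(\row_p) \geq \pi(w^{-1}(a))$ in an appropriate sense — here I use that $\pi$ is dominant (132-avoiding) so that $D(\pi)$ is a partition diagram: having $(w^{-1}(a),w^{-1}(b))\in\Inv(\pi)$ means $(w^{-1}(a), \pi(w^{-1}(b)))\in D(\pi)$, hence $\pi(w^{-1}(b))\leq \lambda_{w^{-1}(a)}\leq \lambda_{\row_p}$ since $\row_p\leq w^{-1}(a)$ and $\lambda$ is weakly decreasing, i.e. $(\row_p, w^{-1}(b))\in\Inv(\pi)$ as desired.

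I expect the main obstacle to be the first paragraph's geometric bookkeeping — establishing $\row_p \le w^{-1}(a)$ and $w^{-1}(a) < w^{-1}(b)$ cleanly from the pipe-dream picture, and confirming that the condition $\south_p<\west_p$ is exactly what makes pipes $a$ and $b$ cross above $p$ (so that $(w^{-1}(a),w^{-1}(b))\in\Inv(w)$ rather than merely something weaker). Once that crossing is in hand, the rest is a short deduction from $\Inv(w)\subseteq\Inv(\pi)$ and the partition-diagram shape of $D(\pi)$, using monotonicity of $\lambda$ along rows. The statement that $\calP_A$ depends only on $w$ then follows since membership of $(P,p)$ in $\DeltaPD(w)$ requires $p\in P(\pi)$, and we have just shown that $\south_p<\west_p$ automatically guarantees $p\in P(\pi)$ for any admissible $\pi$.
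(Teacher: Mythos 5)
Your proof is correct and takes essentially the same route as the paper's: note that $\south_p<\west_p$ forces pipes $a=\west_p$ and $b=\south_p$ to cross above row $\row_p$, transfer the resulting inversion $(w^{-1}(a),w^{-1}(b))$ from $w$ to $\pi$ via $w\leq_L\pi$, and combine with $\row_p\leq w^{-1}(a)$ and the partition shape of $D(\pi)$. The only cosmetic differences are that the paper invokes the $\Inv(w^{-1})\sseq\coInv(\pi w^{-1})$ form of Proposition~\ref{prop:weak order} rather than $\Inv(w)\sseq\Inv(\pi)$, and phrases the last step as the two-inequality chain $\row_p\leq w^{-1}(\west_p)\leq\clambda_{\pi w^{-1}(\south_p)}$ rather than unwinding through $\lambda$ and $D(\pi)$ as you do.
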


\begin{proof}
Note that pipes $\south_p$ and $\west_p$ must cross above row $\row_p$.
Hence $(\south_p,\west_p)\in \Inv(w^{-1})\sseq \coInv(\pi w^{-1})$ and so $(w^{-1}(\west_p),w^{-1}(\south_p))\in \Inv(\pi)$.
It follows that $\row_p \leq w^{-1}(\west_p) \leq \clambda_{\pi w^{-1}(\south_p)}$, so we conclude $p\in P(\pi)$.
\end{proof}

\noindent
Since the definition of $(A,k)$-aligned also does not depend on $\pi$, the map $\Phi_A$ is independent of $\pi$ as well.
\end{remark}

\subsection{Auxiliary results on $(*,k)$-aligned pairs}

In this section, we will prove various results on $(*,k)$-aligned pairs which, in particular, were needed for the construction of $\Phi_A$ and $\Phi_B$ given in the previous section.

We begin with the following lemma, which was needed in defining the operators $\phi_{*,k}$ acting on $(*,k)$-aligned pairs.

\begin{lemma}
\label{lem:aligned is slidable}
Let $(P,p)$ be a $(*,k)$-aligned pair \textup{(}$*\in \{A,B\}$, $k\geq 1$\textup{)}.
Then $(P,p)$ is slidable.
\end{lemma}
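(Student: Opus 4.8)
The plan is to unwind the definition of "slidable" and check that the required position $q$ exists by using the two conditions baked into $(*,k)$-alignment. Recall that $(P,p)$ is slidable precisely when $w^{-1}(\west_p) > \row_q$, where $q$ is the position in row $\row_p$ whose south pipe is $\west_p$; such a $q$ exists and lies weakly west of $p$ as long as pipe $\west_p$ has not yet exited to the west by row $\row_p$, i.e. as long as $\row_p \leq w^{-1}(\west_p)$, and we need the strict inequality $\row_p < w^{-1}(\west_p)$ so that $q \neq p$ (equivalently, so that the pipe exiting $p$ to the west is genuinely entering $p$ from the west rather than turning). Actually the cleanest route: write $a = \west_p$. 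I want to show $w^{-1}(a) > \row_p$, which both guarantees $q$ exists in row $\row_p$ and is exactly the slidability condition. Since $p$ is a bump with west-pipe $a$, pipe $a$ passes through $p$ horizontally, so $w^{-1}(a) \geq \row_p$; it remains to rule out $w^{-1}(a) = \row_p$.

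**Ruling out equality** splits by case on $*$. If $* = A$: alignment gives $(\west_p, k) = (a,k) \in \Inv(w^{-1})$, so in particular $a \neq k$ and $w^{-1}(a) > w^{-1}(k)$ — wait, $\Inv(w^{-1}) = \{(i,j) : i < j,\ w^{-1}(i) > w^{-1}(j)\}$, so $(a,k)\in\Inv(w^{-1})$ forces $a < k$ and $w^{-1}(a) > w^{-1}(k) \geq 1$, hence $w^{-1}(a) \geq 2$; more usefully, $p$ lies weakly northwest of pipe $k$, and since pipe $k$ occupies position $(w^{-1}(k), k)$ as an elbow (it turns there), the constraint "$p$ weakly northwest of pipe $k$" together with $p$ being a bump with west-pipe $a$ where pipes $a$ and $k$ cross below... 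Let me instead argue directly: if $w^{-1}(a) = \row_p$, then pipe $a$ exits to the west exactly in row $\row_p$, meaning $p$ is the turning elbow of pipe $a$, so $\south_p = a = \west_p$ — but then the $(A,k)$-alignment condition $(\west_p,k)\in\Inv(w^{-1})$ combined with pipe $k$'s position being weakly southeast of $p$ would contradict $(a,k)\in\Inv(w^{-1})$ (pipes $a$ and $k$ would have to cross strictly below row $\row_p$, but pipe $a$ has already exited west at row $\row_p$). For $* = B$: here $p\in P(\bump)$ with $\south_p = k$ in the $\calP_B$ setup... actually in general $(B,k)$-alignment gives $(\west_p, k)\in\coInv(\pi w^{-1})$ and $\row_p \leq \clambda_{\pi w^{-1}(k)}$; if $w^{-1}(a) = \row_p$ then again $p$ is pipe $a$'s turning point so $\south_p = a$, and we'd need to see this conflicts with $p$ lying weakly northwest of pipe $k$ and pipe $k$ passing south of $p$.

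**The honest approach** I'd actually write: fix $a = \west_p$, $b = \south_p$; note pipes $a$ and $b$ both pass through $p$, with $a$ going east–west and $b$ going north–south, and neither turns at $p$ (it's a bump, not a cross — in a bump tile the two pipes $\drawJelbow$-and-$\drawRelbow$ each do turn, so I need to be careful: in a bump, $\west_p = \north_p$ for one pipe and $\south_p = \east_p$ for the other, so each of the two pipes through $p$ does turn). So $\west_p = \north_p =: a$: pipe $a$ comes in from the north and leaves to the west at $p$. Then pipe $a$, traced from column $a$ downward, reaches row $\row_p$ and turns west; but "$p$ lies weakly northwest of pipe $k$" means pipe $k$ passes weakly southeast of $p$, and in both the $A$ and $B$ cases the alignment hypothesis on the pair $(a,k)$ tells us about where pipes $a$ and $k$ sit relative to each other, from which $w^{-1}(a)$ — the row where pipe $a$ exits — must be strictly below $\row_p$. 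The point that makes this work is that $a = \north_p$, so pipe $a$ occupying $p$ from the north forces $w^{-1}(a) \geq \row_p$; the alignment data then upgrades this to strict, giving exactly $w^{-1}(\west_p) > \row_p$, i.e. slidability.

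**The main obstacle** is not any single inequality but bookkeeping the geometry correctly: specifically, correctly reading off from "$p$ is a bump" that $\north_p = \west_p$ and $\south_p = \east_p$ (so that the west-pipe of $p$ is the one coming down from the north), and then matching the "weakly northwest of pipe $k$" condition against the inversion/co-inversion condition on $(\west_p, k)$ to force the turning row of pipe $\west_p$ strictly below $\row_p$. I expect the cleanest writeup handles $A$ and $B$ almost uniformly: in the $A$ case use $(\west_p,k)\in\Inv(w^{-1})$, in the $B$ case use that pipe $k$'s elbow sits weakly southeast of $p$ (from the northwest condition) plus $\row_p \leq \clambda_{\pi w^{-1}(k)} < w^{-1}(k)$, each time concluding that pipe $\west_p$ cannot already have exited west at row $\row_p$, hence $w^{-1}(\west_p) > \row_p$, which is the slidability condition, and which moreover produces the desired $q$ in row $\row_p$ with $\south_q = \west_p$.
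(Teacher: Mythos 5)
Your overall reduction matches the paper's: set $a = \west_p$, observe that pipe $a$ exits $p$ westward so $w^{-1}(a) \geq \row_p$, and then use the alignment hypotheses to rule out equality, which is exactly the slidability condition. But the case analysis has genuine problems.

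In the $A$ case, the intermediate claim that $w^{-1}(a) = \row_p$ would force $\south_p = a = \west_p$ is simply false: $p$ is a bump, so $\south_p$ is the \emph{other} pipe through $p$, no matter where pipe $a$ exits. The parenthetical remark that you really want --- pipes $a$ and $k$ must cross strictly below row $\row_p$, which is impossible if pipe $a$ has already exited at row $\row_p$ --- is the right idea, but you assert the ``strictly below'' part without justification (it needs that $p$ is weakly NW of pipe $k$ and $a < k$). The paper phrases it constructively: $(A,k)$-alignment means pipes $\west_p$ and $k$ cross at some $q$ with $\row_q > \row_p$ and $\south_q = \west_p$, whence $w^{-1}(\west_p) > \row_q > \row_p$.

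In the $B$ case the sketch does not close. You extract from $\row_p \leq \clambda_{\pi w^{-1}(k)}$ only the weaker consequence $\row_p < w^{-1}(k)$ and gesture at pipe $k$'s elbow sitting southeast of $p$, but there is no contradiction derivable from $w^{-1}(a) = \row_p < w^{-1}(k)$ alone (that's just $(a,k)\in\coInv(w^{-1})$, which is compatible with everything in sight). The step you're missing is to combine the two $(B,k)$-alignment conditions directly: $(\row_p,w^{-1}(k))\in\Inv(\pi)$ says $\pi(\row_p) > \pi w^{-1}(k)$, so if $\row_p = w^{-1}(\west_p)$ this gives $\pi w^{-1}(\west_p) > \pi w^{-1}(k)$, contradicting $(\west_p,k)\in\coInv(\pi w^{-1})$, i.e.\ $\pi w^{-1}(\west_p) < \pi w^{-1}(k)$. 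This is purely permutation bookkeeping and needs no geometry beyond the easy $w^{-1}(\west_p) \geq \row_p$; your route via $\row_p < w^{-1}(k)$ throws away exactly the information that makes it work.
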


\begin{proof}
Suppose that $p$ is $(A,k)$-aligned in $P$.
This is equivalent to pipes $\west_p$ and $k$ crossing at some position $q\in P(\cross)$ with $\row_q > \row_p$.
Then $\south_q = \west_p$.
Hence $w^{-1}(\west_p) > \row_q > \row_p$, so $(P,p)$ is slidable.

Now, suppose that $p$ is $(B,k)$-aligned.
Then both $(\west_p,k) \in \coInv(\pi w^{-1})$ and $(\row_p,w^{-1}(k))\in \Inv(\pi)$.
These imply that $\row_p \neq w^{-1}(\west_p)$, since otherwise we would get both $\pi w^{-1}(\west_p) < \pi w^{-1}(k)$ and $\pi w^{-1}(\west_p) > \pi w^{-1}(k)$.
Hence $\row_p > w^{-1}(\west_p)$, so $(P,p)$ is slidable.
\end{proof}

The next proposition collects various properties of the operators $\phi_{*,k}$.
Property (ii) involves the following quantities:
if $P\in \PD(w)$ and $p\in P(\bump)$, then define
\begin{align*}
    \Sigma_{A,k}(P,p) &:= \{
        x\in \bbN^2 \mid
        \text{$x$ lies (weakly) northwest of pipes $\west_{P,p}$ and $k$}
    \}, \\
    \Sigma_{B,k}(P,p) &:= \{
        x\in \bbN^2 \mid
        x\in \Sigma_{A,k}(P,p) \text{ and }
        \row_x \leq \clambda_{\pi w^{-1}(k)}
    \}.
\end{align*}

\begin{figure}
\begin{tikzpicture}[scale=0.4]
    \begin{shift}[(0,0)]
    \drawPD{7}{
        1,0,0,0,1,0,
        0,0,0,0,4,
        0,1,0,1,
        0,1,0,
        0,1,
        0
    }
    \drawPipe 
    {3/4,4/2} 
    {} 
    {1/6,2/5,3/3,4/1} 
    {1/7,2/6,3/5,4/3,5/1} 
    \drawPDShade{7}{
        1,1,1,1,3,0,
        1,1,1,1,4,
        1,1,1,4,
        1,2,4,
        4,0,
        0
    }
    \drawAxes{7}

    \node at (4,0.5) {$\Sigma_{A,k}(P_0,p_0)$};
    \node at (8,0.5) {$\subset$};
    \draw[->] (6,-4.5) -- (8,-4.5)
    node[midway,above] {$\phi_{A,k}$};
    \end{shift}

    \begin{shift}[(8,0)]
    \drawPD{7}{
        1,0,0,0,4,0,
        0,0,0,1,0,
        0,1,0,1,
        0,1,0,
        0,1,
        0
    }
    \drawPipe 
    {3/4,4/2} 
    {} 
    {1/6,2/5,3/3,4/1} 
    {1/7,2/6,3/5,4/3,5/1} 
    \drawPDShade{7}{
        1,1,1,1,4,0,
        1,1,1,3,0,
        1,1,1,4,
        1,2,4,
        4,0,
        0
    }
    \drawAxes{7}
    
    \node at (4,0.5) {$\Sigma_{A,k}(P_1,p_1)$};
    \node at (8,0.5) {$\subset$};
    \draw[->] (6,-4.5) -- (8,-4.5)
    node[midway,above] {$\phi_{A,k}$};
    \end{shift}

    \begin{shift}[(16,0)]
    \drawPD{7}{
        1,0,0,4,0,0,
        0,0,0,1,0,
        0,1,0,1,
        0,1,0,
        0,1,
        0
    }
    \drawPipe 
    {3/4,4/2} 
    {} 
    {1/6,2/5,3/3,4/1} 
    {1/7,2/6,3/5,4/3,5/1} 
    \drawPDShade{7}{
        1,1,1,4,0,0,
        1,1,4,0,0,
        1,3,0,0,
        1,4,0,
        4,0,
        0
    }
    \drawAxes{7}
    
    \node at (4,0.5) {$\Sigma_{A,k}(P_2,p_2)$};
    \node at (8,0.5) {$\subset$};
    \draw[->] (6,-4.5) -- (8,-4.5)
    node[midway,above] {$\phi_{A,k}$};
    \end{shift}

    \begin{shift}[(24,0)]
    \drawPD{7}{
        1,0,4,0,0,0,
        0,0,0,1,0,
        0,1,0,1,
        0,1,0,
        0,1,
        0
    }
    \drawPipe 
    {3/4,4/2} 
    {} 
    {1/6,2/5,3/3,4/1} 
    {1/7,2/6,3/5,4/3,5/1} 
    \drawPDShade{7}{
        1,1,4,0,0,0,
        1,4,0,0,0,
        4,0,0,0,
        0,0,0,
        0,0,
        0
    }
    \drawAxes{7}
    
    \node at (4,0.5) {$\Sigma_{A,k}(P_3,p_3)$};
    \end{shift}
\end{tikzpicture}
\caption{
    The sets $\Sigma_A(P_i,p_i)$ of the pipe dreams in Figure \ref{fig:Phi_A}.
}
\label{fig:Sigma_A}
\end{figure}
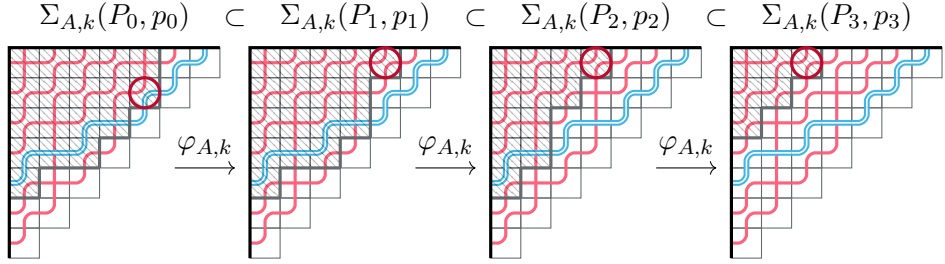

\begin{proposition}
\label{prop:phi props}
Let $(P,p)$ be a $(*,k)$-aligned pair \textup{(}$*\in \{A,B\}$, $k\geq 1$\textup{)}.
Write $(Q,q) = \phi_{*,k}(P,p)$.
\begin{enumerate}
    \item[(i)]
    $\west_{Q,q} \leq \west_{P,p}$ and $\south_{Q,q} \neq k$.
    \item[(ii)]
    $\Sigma_{*,k}(Q,q)$ is properly contained in $\Sigma_{*,k}(P,p)$.
    \item[(iii)]
    If $(Q,q)$ is not $(*,k)$-aligned, then it is in $\calP_0$.
\end{enumerate}
\end{proposition}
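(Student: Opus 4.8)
The plan is to do a case analysis following the definition of $\phi_{*,k}$, keeping track of how $\slide$ and $\swap$ move the marked bump and change its $\west$- and $\south$-labels. Write $a=\west_{P,p}$, $b=\south_{P,p}$. By Lemma~\ref{lem:aligned is slidable}, $(P,p)$ is slidable, so $\slide(P,p)=(P,p')$ is defined, with $\row_{p'}=\row_p$, $\col_{p'}<\col_p$ and $\south_{P,p'}=a$; set $c=\west_{P,p'}$. There are two cases: \textbf{(S)} $(Q,q)=(P,p')$, and \textbf{(SW)} $(P,p')$ is $(*,k)$-aligned and swappable and $(Q,q)=\swap(P,p')$.

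First I would record a few local facts. For the slide: pipe $a$ runs westward along row $\row_p$ through $\cross$ tiles, occurring as the jelbow of $p$ and the relbow of $p'$, so $c=\north_{P,p'}$ enters $p'$ from the north and lies strictly west of pipe $a$ in row $\row_p-1$; since $P$ is reduced, pipes $a$ and $c$ do not cross in rows $1,\dots,\row_p-1$, so $c\le a$ (had $c>a$, pipe $c$ would enter east of pipe $a$ and could not reach its position west of pipe $a$ near $p'$ without crossing it). Also $p'$ remains weakly northwest of pipe $k$ (being weakly northwest of $p$, which is), $\row_{p'}=\row_p$, and $p\in\Sigma_{*,k}(P,p)\smallsetminus\Sigma_{*,k}(P,p')$ because $p$ lies strictly east of $\north_{P,p'}$ in its row; together with the inclusion of the region northwest of pipe $c$ in the region northwest of pipe $a$, this gives $\Sigma_{*,k}(P,p')\subsetneq\Sigma_{*,k}(P,p)$. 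For the swap in case \textbf{(SW)}: $q$ is the unique $\cross$ of $P$ at which pipes $c$ and $a$ cross, it lies weakly south and west of $p'$, and in $Q$ the new bump at $q$ carries precisely pipes $c$ and $a$, so $\{\west_{Q,q},\south_{Q,q}\}=\{c,a\}$; moreover $q$ is weakly northwest of pipe $k$, and when $*=B$ one gets $\row_q\le\row_{p'}\le\clambda_{\pi w^{-1}(k)}$, with $\Sigma_{*,k}(Q,q)\subsetneq\Sigma_{*,k}(P,p')$ by the same kind of estimate.

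Given these facts, parts (i) and (ii) fall out quickly. Part (ii) is immediate. For (i): $\west_{Q,q}\le a=\west_{P,p}$ holds in case \textbf{(S)} since $\west_{Q,q}=c\le a$, and in case \textbf{(SW)} since $\west_{Q,q}\in\{c,a\}$; and $\south_{Q,q}\ne k$ because $(*,k)$-alignment of $(P,p)$ and of $(P,p')$ forces $\west_{P,p}<k$ and $\west_{P,p'}<k$ (both $\Inv(w^{-1})$ and $\coInv(\pi w^{-1})$ consist of pairs $(i,j)$ with $i<j$), so in either case $\south_{Q,q}\in\{a,c\}$ with $a,c<k$.

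The hard part will be (iii). Assuming $(Q,q)$ is not $(*,k)$-aligned, the local facts above show $q$ is still weakly northwest of pipe $k$ (and, for $*=B$, satisfies $\row_q\le\clambda_{\pi w^{-1}(k)}$), so the clause that fails is the inversion one: $(\west_{Q,q},k)\notin\Inv(w^{-1})$ if $*=A$, resp. $(\west_{Q,q},k)\notin\coInv(\pi w^{-1})$ if $*=B$. What remains is to deduce $(\west_{Q,q},\south_{Q,q})\in\Inv(\pi w^{-1})$; the mechanism is that $\swap$ is triggered precisely so as to keep $\south_{Q,q}$ strictly between $\west_{Q,q}$ and $k$ in the order on $\pi w^{-1}$-values after each $(*,k)$-aligned step, and the only way to lose $(*,k)$-alignment is for this spread to collapse into $(\west_{Q,q},\south_{Q,q})\in\Inv(\pi w^{-1})$. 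Then $\Inv(\pi w^{-1})\sseq\coInv(w^{-1})$ (Proposition~\ref{prop:weak order}(iii)) shows pipes $\west_{Q,q}$ and $\south_{Q,q}$ bump at $q$, and $\row_q\le\clambda_{\pi w^{-1}(\south_{Q,q})}$, i.e.\ $q\in Q(\pi)$, follows as in the proof of Proposition~\ref{prop:dominant pos}; hence $(Q,q)\in\calP_0$. The genuinely delicate points are: pinning down which of $c,a$ becomes the jelbow (hence $\west_{Q,q}$) versus the relbow of the bump $\swap$ creates at $q$, and reconciling the $A$-version of the construction — phrased through $\Inv(w^{-1})$ — with the $B$-version — phrased through $\coInv(\pi w^{-1})$ plus the row bound — since this is exactly where the two branches of the case analysis diverge. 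A lesser annoyance is making "weakly northwest of pipe $k$" precise enough to run the $\Sigma_{*,k}$-containments cleanly; I would fix the convention that $x$ is weakly northwest of a pipe when that pipe passes neither strictly north of $x$ in column $\col_x$ nor strictly west of $x$ in row $\row_x$, and check directly that $\slide$ and $\swap$ preserve it.
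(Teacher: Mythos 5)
Your proposal has a genuine gap, centered on a false ``local fact'' that you use throughout.

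You claim that, writing $c=\west_{P,p'}$ and $a=\west_{P,p}=\south_{P,p'}$ (your notation), ``since $P$ is reduced, pipes $a$ and $c$ do not cross in rows $1,\dots,\row_p-1$, so $c\le a$.'' This does not follow: pipes $a$ and $c$ merely \emph{bump} at $p'$, so a single crossing of these two pipes somewhere above row $\row_p$ is perfectly compatible with $P$ being reduced, and in that situation $c>a$. Indeed this is exactly what can happen when a swap is triggered, and the paper explicitly treats both subcases $a<b$ and $a>b$ (its notation $a,b$ are your $c,a$) in its Case 2: when $a>b$ the crossing $q$ lies strictly \emph{north} of $p'$, which also falsifies your assertion that $q$ ``lies weakly south and west of $p'$.'' The inequality $c\le a$ is genuine only in the no-swap case, and even there the paper does not derive it from reducedness; it derives it from the structure of the algorithm (if $c>a$ then $(P,p')$ would be swappable and, together with the alignment conditions, the algorithm would have performed $\swap$, contradicting that we are in the no-swap case). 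Because your proof of (i) in case (S), your $\Sigma_{*,k}$-containment in (ii) (which relies on pipe $c$ staying weakly northwest of pipe $a$), and your picture of the swap in (SW) all lean on $c\le a$ and on $q$ lying south of $p'$, they do not go through as written.

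Beyond that, part (iii) — which you yourself identify as ``the hard part'' — is only gestured at. The statement that ``the only way to lose $(*,k)$-alignment is for this spread to collapse into $(\west_{Q,q},\south_{Q,q})\in\Inv(\pi w^{-1})$'' is the conclusion you need, not an argument for it. The paper's proof here is doing real work: for $*=A$ it shows $(a,b)\in\Inv(\pi w^{-1})$ by ruling out $(a,b)\in\coInv(\pi w^{-1})$ via a $132$-pattern in $\pi$, and it shows $p'\in P(\pi)$ by locating the crossing of pipes $b$ and $k$ in $P(\cross)\sseq P(\pi)$; for $*=B$ the argument is parallel but distinct, again invoking $\pi$'s $132$-avoidance and the row bound $\clambda_{\pi w^{-1}(k)}$. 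None of this is recoverable from the sketch. To repair the proposal you would need to drop the reducedness shortcut, split on whether $\phi_{*,k}$ performed a swap, establish the sign of $c-a$ (and the row of the crossing $q$) separately in each branch, and then carry out the $132$-pattern arguments for both $A$ and $B$ to get (iii).
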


\begin{proof}
To simplify notation, let $(P,p') = \slide(P,p)$ and write $a = \west_{P,p'}$, $b = \south_{P,p'} = \west_{P,p}$, $c = \south_{P,p}$, and  $i = \row_p = \row_{p'}$.
Notice that $p$ lies (weakly) northwest of pipe $k$ and $b < k$ (regardless of if $*=A$ or $*=B$), so pipes $b$ and $k$ do not cross at or above row $i$.

We break the proof up into two cases depending on if $(Q,q) = (P,p')$ or $(Q,q') = \swap(P,p')$.

\textbf{Case 1.}
Suppose $(Q,q) = (P,p')$.
We first show $a < b$:
if instead $a > b$, then pipes $a$ and $b$ would cross at some $q\in P(\cross)$ with $\row_q < i$.
Since pipe $b$ is left of pipe $k$ in the first $i$ rows of $P$, $q$ must lie northwest of pipe $k$.
But then $\swap(P,p')$ would be $(*,k)$-aligned, so we should have obtained $\phi_{*,k}(P,p) = \swap(P,p')$, a contradiction.
Hence $a < b$ as desired.
Since also $\south_{P,p'} = b < k$, this gives (i).

For (ii), notice that if $* = A$, then pipe $a = \west_{P,p'}$ stays left of pipe $b = \west_{P,p}$ in the region northwest of pipe $k$, and if $* = B$, then the same is true in the part of this region at or above row $\clambda_{\pi w^{-1}(k)}$.
This shows that $\Sigma_{*,k}(P,p')\sseq \Sigma_{*,k}(P,p)$.
This containment is proper since, for instance, $p$ is in the latter but not the former.

We are left to show (iii), so assume $(P,p')$ is not $(*,k)$-aligned.
We will show $(P,p')\in \calP_0$.
First consider when $* = A$.
Since $p'$ is northwest of pipe $k$, the failure for $(P,p')$ to be $(A,k)$-aligned must be that $(a,k)\in \coInv(w^{-1})$.
Since $(P,p)$ is $(A,k)$-aligned, $(b,k)\in \Inv(w^{-1}) \sseq \coInv(\pi w^{-1})$ and so pipes $b$ and $k$ cross at some $q\in P(\cross)$ with $\row_q > i$ and $\south_{P,q} = b$.
Since $P(\cross)\sseq P(\pi)$, we get $\pi w^{-1}(b) \leq \lambda_{\row_q} \leq \lambda_i$, so $p'\in P(\pi)$.
Furthermore, we cannot have $(a,b) \in \coInv(\pi w^{-1})$---otherwise, $w^{-1}(a) < w^{-1}(k) < w^{-1}(b)$ and $\pi w^{-1}(a) < \pi w^{-1}(b) < \pi w^{-1}(k)$, so $\pi$ would contain a $132$-pattern.
Thus $(a,b)\in \Inv(\pi w^{-1})$, so we conclude $(P,p')\in \calP_0$.

We now consider what happens when $* = B$.
Here we must have $(a,k)\in \Inv(\pi w^{-1}) \sseq \coInv(w^{-1})$ for $(P,p')$ to not be $(B,k)$-aligned.
Since $(P,p)$ is $(B,k)$-aligned, $(b,k)\in \coInv(\pi w^{-1})$ and so $\pi w^{-1}(b) < \pi w^{-1}(k) < \pi w^{-1}(a)$.
This means $(a,b)\in \coInv(w^{-1})$, since if $(a,b)\in \Inv(w^{-1})$ then $w^{-1}(b) < w^{-1}(a) < w^{-1}(k)$ would give a $132$-pattern in $\pi$.
In particular, we get $(a,b)\in \Inv(\pi w^{-1})$.
Furthermore, $i\leq \clambda_\pi w^{-1}(k) \leq \clambda_{\pi w^{-1}(b)}$, so $p'\in P(\pi)$.
It follows that $(P,p')\in \calP_0$.

\textbf{Case 2.} Suppose $(Q,q) = \swap(P,p')$.
We claim that $\west_{Q,q} = b$ (and so $\south_{Q,q} = a \neq k$), giving (i).
If $a<b$, then we must have $\row_q > \row_p$, so $b = \south_{P,p'} = \west_{Q,p'} = \west_{Q,q}$.
If $a>b$, then $\row_q < \row_p$, so $b = \south_{P,q} = \west_{Q,q}$.
This gives the claim.

(iii) is vacuous here, since $(Q,q)$ is always $(*,k)$-aligned by construction.

It remains to show (ii).
When passing from $P$ to $Q$, pipe $b = \west_{P,p} = \west_{Q,q}$ changes by swapping out its segment between positions $p'$ and $q$ with the corresponding segment of pipe $a$.
Since in $P$, this segment of pipe $b$ lies below the corresponding segment of pipe $a$, and since both of these segments lie northwest of pipe $k$, we have an inclusion $\Sigma_{*,k}(Q,q)\sseq \Sigma_{*,k}(P,p)$ which is proper again by observing that $p$ is in the difference.
\end{proof}

As a consequence of Proposition \ref{prop:phi props}, we get the following corollary which was used to show that the chain of pipe dreams obtained in the construction of $\Phi_A$ and $\Phi_B$ terminates.

\begin{corollary}
\label{cor:phi terminates}
Let $(P,p)$ be a $(*,k)$-aligned pair.
Then there is some $m\geq 1$ such that $(Q,q) = \phi_{*,k}^m(P,p)$ is not $(*,k)$-aligned.
Furthermore, $(Q,q)\in \calP_0$.
\end{corollary}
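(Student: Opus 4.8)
The plan is to iterate $\phi_{*,k}$ starting from $(P,p)$ and show the process must halt, using the strict monotonicity recorded in Proposition \ref{prop:phi props}(ii). The first observation I would make is that $\Sigma_{*,k}$ always takes \emph{finite} values: any position weakly northwest of pipe $k$ in a pipe dream $P'\in\PD(w)$ lies in one of the rows $1,\dots,w^{-1}(k)$ and one of the columns $1,\dots,k$, so $\Sigma_{A,k}(P',p')$ (and hence $\Sigma_{B,k}(P',p')\sseq\Sigma_{A,k}(P',p')$) is a finite subset of $\bbN^2$. In particular, $\Sigma_{*,k}$ lands in the poset of finite subsets of $\bbN^2$ ordered by inclusion, which has no infinite strictly descending chains.

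Next I would set up the iteration. Writing $(P_0,p_0):=(P,p)$, this pair is $(*,k)$-aligned by hypothesis, hence slidable by Lemma \ref{lem:aligned is slidable}, so $\phi_{*,k}(P_0,p_0)$ is defined. Proceeding inductively: as long as the current pair $(P_i,p_i)$ is $(*,k)$-aligned we may form $(P_{i+1},p_{i+1}):=\phi_{*,k}(P_i,p_i)$, and by Proposition \ref{prop:phi props}(ii) this yields a chain
\[
    \Sigma_{*,k}(P_0,p_0)\supsetneq\Sigma_{*,k}(P_1,p_1)\supsetneq\Sigma_{*,k}(P_2,p_2)\supsetneq\cdots
\]
of finite sets. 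By the previous paragraph this chain cannot be infinite, so there is a least $m$ for which $(P_m,p_m)=\phi_{*,k}^m(P,p)$ fails to be $(*,k)$-aligned; since $(P_0,p_0)$ \emph{is} $(*,k)$-aligned we have $m\geq 1$. Taking $(Q,q):=(P_m,p_m)$ gives the first assertion.

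For the last assertion I would apply Proposition \ref{prop:phi props}(iii) to the final step: $(P_{m-1},p_{m-1})$ is $(*,k)$-aligned and $(Q,q)=\phi_{*,k}(P_{m-1},p_{m-1})$ is not $(*,k)$-aligned, whence $(Q,q)\in\calP_0$. The only point requiring care is the finiteness bookkeeping used to terminate the descending chain; the genuinely combinatorial content — that one application of $\phi_{*,k}$ strictly shrinks $\Sigma_{*,k}$, and that an output of $\phi_{*,k}$ which is no longer $(*,k)$-aligned must lie in $\calP_0$ — is already supplied by Proposition \ref{prop:phi props}.
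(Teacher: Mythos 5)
Your proof is correct and takes essentially the same route as the paper: iterate $\phi_{*,k}$, use Proposition \ref{prop:phi props}(ii) to get a strictly descending chain of sets $\Sigma_{*,k}$ forcing termination, and apply Proposition \ref{prop:phi props}(iii) to conclude $(Q,q)\in\calP_0$. The one (welcome) addition is that you explicitly justify finiteness of $\Sigma_{*,k}$, a point the paper leaves implicit.
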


\begin{proof}
If $(P_i,p_i) := \phi_{*,k}^i(P,p)$ were $(*,k)$-aligned for all $i\geq 1$, then by Proposition \ref{prop:phi props}(ii) we would have an infinite descending chain \[
    \Sigma_{*,k}(P_0,p_0) \supset \Sigma_{*,k}(P_1,p_1) \supset \cdots.
\]
Hence there must be some $m\geq 1$ such that $(Q,q) = \phi_{*,k}^m(P,p)$ is not $(*,k)$-aligned.
By Proposition \ref{prop:phi props}(iii), $(Q,q)$ must lie in $\calP_0$.
\end{proof}

\begin{figure}
\begin{tikzpicture}[scale=0.4]
    \begin{shift}[(0,0)]
    \drawPD{7}{
        1,0,0,0,1,0,
        0,1,1,0,0,
        0,1,0,4,
        1,1,0,
        1,1,
        0
    }
    \drawPipe 
    {5/1,5/2} 
    {} 
    {1/6,2/5,3/4,4/3} 
    {1/7,2/6,3/5,4/4,5/3} 
    \drawPDShade{7}{
        1,1,1,1,3,0,
        1,1,1,1,4,
        2,2,2,4,
        0,0,0,
        0,0,
        0
    }
    \drawAxes{7}
    \draw[black,dashed,very thick] (0.5,-3.5) -- (5.5,-3.5);

    \node at (4,0.5) {$\Sigma_{B,k}(P_0,p_0)$};
    \node at (8,0.5) {$\subset$};
    \draw[->] (6,-4.5) -- (8,-4.5)
    node[midway,above] {$\phi_{B,k}$};
    \end{shift}

    \begin{shift}[(8,0)]
    \drawPD{7}{
        1,0,0,0,1,0,
        0,1,1,0,0,
        0,1,4,0,
        1,1,0,
        1,1,
        0
    }
    \drawPipe 
    {5/1,5/2} 
    {} 
    {1/6,2/5,3/4,4/3} 
    {1/7,2/6,3/5,4/4,5/3} 
    \drawPDShade{7}{
        1,1,1,4,0,0,
        1,1,3,0,0,
        2,2,4,0,
        0,0,0,
        0,0,
        0
    }
    \drawAxes{7}
    \draw[black,dashed,very thick] (0.5,-3.5) -- (5.5,-3.5);

    \node at (4,0.5) {$\Sigma_{B,k}(P_1,p_1)$};
    \node at (8,0.5) {$\subset$};
    \draw[->] (6,-4.5) -- (8,-4.5)
    node[midway,above] {$\phi_{B,k}$};
    \end{shift}

    \begin{shift}[(16,0)]
    \drawPD{7}{
        1,0,0,0,1,0,
        0,1,4,0,0,
        1,1,0,0,
        1,1,0,
        1,1,
        0
    }
    \drawPipe 
    {5/1,5/2} 
    {} 
    {1/6,2/5,3/4,4/3} 
    {1/7,2/6,3/5,4/4,5/3} 
    \drawPDShade{7}{
        1,1,1,4,0,0,
        1,2,4,0,0,
        4,0,0,0,
        0,0,0,
        0,0,
        0
    }
    \drawAxes{7}
    \draw[black,dashed,very thick] (0.5,-3.5) -- (5.5,-3.5);

    \node at (4,0.5) {$\Sigma_{B,k}(P_2,p_2)$};
    \node at (8,0.5) {$\subset$};
    \draw[->] (6,-4.5) -- (8,-4.5)
    node[midway,above] {$\phi_{B,k}$};
    \end{shift}

    \begin{shift}[(24,0)]
    \drawPD{7}{
        1,0,0,0,1,0,
        4,1,0,0,0,
        1,1,0,0,
        1,1,0,
        1,1,
        0
    }
    \drawPipe 
    {5/1,5/2} 
    {} 
    {1/6,2/5,3/4,4/3} 
    {1/7,2/6,3/5,4/4,5/3} 
    \drawPDShade{7}{
        3,0,0,0,0,0,
        4,0,0,0,0,
        0,0,0,0,
        0,0,0,
        0,0,
        0
    }
    \drawAxes{7}
    \draw[black,dashed,very thick] (0.5,-3.5) -- (5.5,-3.5);

    \node at (4,0.5) {$\Sigma_{B,k}(P_3,p_3)$};
    \end{shift}
    
\end{tikzpicture}
\caption{
    The sets $\Sigma_B(P_i,p_i)$ of the pipe dreams in Figure $\ref{fig:Phi_B}$.
}
\label{fig:Sigma_B}
\end{figure}
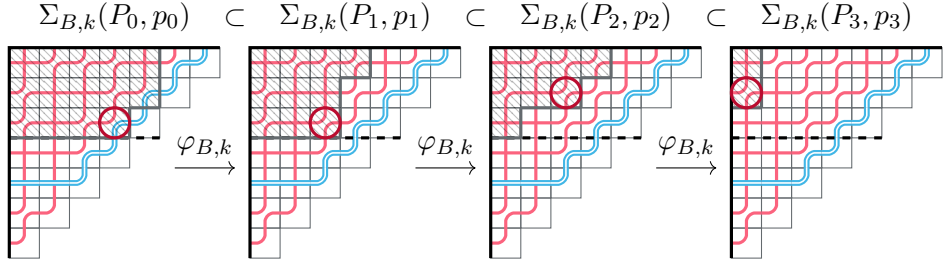

\subsection{Properties of $\Phi$}

To complete our proof of Theorem \ref{thm:delta pi}, it suffices to show that $\Phi$ satisfies ($\Phi$.1z), ($\Phi$.1a), ($\Phi$.1b), and ($\Phi$.2).
We first take care of the last of these.

\begin{proposition}
\label{prop:Phi weights}
Let $(P,p)\in \DeltaPD(w)$ and write $Q = \Phi(P,p)$.
Then $\frac{x_{\row_p}}{y_{\row_p}} x^{P(\smallcross)} y^{P(\smallbump)\cap P(\pi)}
= x^{Q(\smallcross)} y^{Q(\smallbump)\cap P(\pi)}$.
\end{proposition}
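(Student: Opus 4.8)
The plan is to reduce the monomial identity to a statement purely about the row-distribution of $\cross$ tiles, and then verify that statement by walking along the chain of elementary moves that defines $\Phi$.

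\textbf{Reduction.} For a pipe dream $R$ write $c_i(R) := |R(\cross)\cap(\text{row }i)|$. Proposition \ref{prop:dominant pos} guarantees that, for any permutation $v\leq_L\pi$ and $R\in\PD(v)$, the padding set $R(\pi)$ has exactly $\lambda_i$ positions in row $i$ and contains $R(\cross)$. More generally — this is pure set counting — whenever $S\sseq\bbN^2$ satisfies $|S\cap(\text{row }i)| = \lambda_i$ for all $i$ and $R(\cross)\sseq S$, one gets $|R(\bump)\cap S\cap(\text{row }i)| = \lambda_i-c_i(R)$, hence $x^{R(\cross)}y^{R(\bump)\cap S} = \prod_{i\geq 1}x_i^{c_i(R)}y_i^{\lambda_i-c_i(R)}$. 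Applying this with $(R,S) = (P,P(\pi))$ and with $(R,S) = (Q,P(\pi))$ — the latter being legitimate provided $Q(\cross)\sseq P(\pi)$, since $P(\pi)$ still has $\lambda_i$ positions per row — the proposition reduces to the conjunction of
\begin{itemize}
\item[(a)] $c_i(Q) = c_i(P) + [\,\row_p = i\,]$ for all $i\geq 1$, and
\item[(b)] $Q(\cross)\sseq P(\pi)$;
\end{itemize}
granting (a) and (b), both sides of the claimed identity equal $\prod_i x_i^{c_i(Q)}y_i^{\lambda_i-c_i(Q)}$.

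\textbf{Proof of (a).} Track the vector $c(P')+e_{\row_{p'}}$ (cross-distribution plus the standard basis vector indexed by the mark's row) along the chain $(P,p)\to(P_0,p_0)\to\cdots\to(P_m,p_m)\to Q$. A $\slide$ leaves the pipe dream untouched and keeps the mark in its row, so it preserves this vector. A move $\swap(P',p') = (P'',p'')$ deletes the $\cross$ at $p''$ and inserts one at the old mark $p'$, so $c(P'') = c(P')-e_{\row_{p''}}+e_{\row_{p'}}$, and it relocates the mark to row $\row_{p''}$; hence $c(P'')+e_{\row_{p''}} = c(P')+e_{\row_{p'}}$ too. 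Thus $c(\,\cdot\,)+e_{\row_{(\cdot)}}$ is constant along the chain of $\slide$/$\swap$ steps, and since $\Phi_0$ turns the $\bump$ at $p_m$ into a $\cross$ we get $c(Q) = c(P_m)+e_{\row_{p_m}}$, i.e. $c(Q)$ equals that constant. For $(P,p)\in\calP_B$ the chain starts at $(P_0,p_0) = (P,p)$, giving $c(Q) = c(P)+e_{\row_p}$. For $(P,p)\in\calP_A$ it starts at $(P_0,p_0) = \swap(P,p)$, which deletes the $\cross$ at $p_0$ and inserts one at $p$, so $c(P_0)+e_{\row_{p_0}} = \bigl(c(P)-e_{\row_{p_0}}+e_{\row_p}\bigr)+e_{\row_{p_0}} = c(P)+e_{\row_p}$, and again $c(Q) = c(P)+e_{\row_p}$. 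This is (a).

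\textbf{Proof of (b).} Maintain along the chain the invariant that every $\cross$ tile of the current pipe dream, as well as the current marked position, lies in $P(\pi)$. It holds at the start, by (D.2) for the $\cross$ tiles and by $p\in P(\bump)\cap P(\pi)$ (definition of $\DeltaPD(w)$). It is preserved by $\swap$, because the newly inserted $\cross$ sits at the previously marked position and the new mark is an old $\cross$ tile, both in $P(\pi)$ by hypothesis; and it is preserved by $\slide$, since $\slide$ moves no $\cross$ tile and — here is where dominance of $\pi$ enters, through $D(\pi)$ being the diagram of a partition, in the spirit of the $\Sigma_{*,k}$ estimates of Proposition \ref{prop:phi props} — sliding the mark westward to the position where pipe $\west_{p'}$ turns south cannot carry it out of the dominated region when the pair is $(*,k)$-aligned. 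Granting the invariant, $Q(\cross) = P_m(\cross)\cup\{p_m\}\sseq P(\pi)$, which is (b).

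The step I expect to be the main obstacle is (b): the moves are described in terms of the \emph{current} pipe dream $P'$, whereas one must control membership in the fixed set $P(\pi)$, and — unlike the case $\pi = \longest$ — $P(\pi)$ need not be ``left-justified'' in each row, so stability of the dominated region under $\slide$ and $\swap$ has to be extracted carefully from the $132$-avoidance of $\pi$. An alternative that avoids pinning the reference set is to carry the bookkeeping of (a) against each pipe dream's \emph{own} padding set, proving the cleaner identity $\tfrac{x_{\row_p}}{y_{\row_p}}x^{P(\cross)}y^{P(\bump)\cap P(\pi)} = x^{Q(\cross)}y^{Q(\bump)\cap Q(\pi)}$; matching this with the stated form again comes down to checking $Q(\cross)\sseq P(\pi)$.
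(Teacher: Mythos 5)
Your reduction via Proposition~\ref{prop:dominant pos} and your proof of (a) are correct; they make explicit what the paper's terse ``clear from their definitions'' is standing in for. Since both $P(\pi)$ and $Q(\pi)$ contain the respective $\cross$ sets and have exactly $\lambda_i$ positions in row $i$, the weight of a pair is determined by the row-count vector of crosses plus the marked row, and your conserved quantity $c(\cdot)+e_{\row_{(\cdot)}}$ tracks exactly that through $\slide$, $\swap$, and the terminal $\Phi_0$ step. To this point your argument and the paper's coincide in substance.

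The obstacle you flag in (b), though, is one you have created for yourself by insisting on the fixed reference set $P(\pi)$ on the right. What the derivation of Theorem~\ref{thm:delta pi} actually consumes is precisely your ``cleaner identity'' $\frac{x_{\row_p}}{y_{\row_p}}x^{P(\cross)}y^{P(\bump)\cap P(\pi)} = x^{Q(\cross)}y^{Q(\bump)\cap Q(\pi)}$, since the right-hand side is the summand of $\frakS^\pi_{t_{ab}w}(x;y)$ contributed by $Q$. That identity is equivalent to (a) alone, by applying your row-count reduction once to $(P,P(\pi))$ and once to $(Q,Q(\pi))$, both legitimate by Proposition~\ref{prop:dominant pos}. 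It is also what the paper's proof actually establishes: the claim that $\slide$ and $\swap$ preserve the weight is only ``clear from their definitions'' if the padding set is taken intrinsically at each intermediate pair $(P',p')$ (so that the weight collapses to row counts), and then the chain terminates in $y^{Q(\bump)\cap Q(\pi)}$, not $y^{Q(\bump)\cap P(\pi)}$. The literal statement with $P(\pi)$ would additionally require your (b), that $Q(\cross)\sseq P(\pi)$, and neither your sketch nor the paper's one-line proof delivers it; the difficulty you sense is real, because after a $\swap$ the slidability and $(*,k)$-alignment conditions refer to $\south_{P',\cdot}$ in the current pipe dream $P'$, whereas membership in $P(\pi)$ is governed by $\south_{P,\cdot}$, and these labels can differ. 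Read $P(\pi)$ on the proposition's right-hand side as $Q(\pi)$; with that reading your (a) is a complete proof and (b) should simply be dropped rather than proved.
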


\begin{proof}
If $(P,p)\in \calP_0$, then the statement is clear.
If $(P,p)\in \calP_A\cup \calP_B$, then $\Phi(P,p)$ is obtained from $(P,p)$ by applying some sequence of $\slide$ and $\swap$ moves and then applying $\Phi_0$.
Hence it suffices to show that both $\slide$ and $\swap$ preserve the monomial weighting on pairs $(P,p)$ given by $\frac{x_{\row_p}}{y_{\row_p}} x^{P(\smallcross)} y^{P(\smallbump)\cap P(\pi)}$,
but this is clear from their definitions.
\end{proof}

The remainder of this paper will be dedicated to proving the three parts constituting ($\Phi$.1).
We start by showing ($\Phi$.1z).

\begin{proposition}
\label{prop:Phi_0}
The map \[
    \Phi_0\colon \calP_0\to \bigcup_{w\lessdot_\pi t_{ab}w} \PD(t_{ab}w)
\] is a bijection.
\end{proposition}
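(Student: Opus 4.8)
The plan is to exhibit an explicit two-sided inverse to $\Phi_0$. Given $Q\in \PD(t_{ab}w)$ for some cover $w\lessdot_\pi t_{ab}w$, pipes $a$ and $b$ cross exactly once in $Q$ (by reducedness, since $\ell(t_{ab}w) = \ell(w)+1$ forces a single crossing of these two pipes). Let $q$ be the position of that crossing, and let $\Psi(Q)$ be the pair $(P,p)$ where $P$ is obtained from $Q$ by replacing the $\cross$ at $q$ with a $\bump$ and $p = q$. Replacing this single $\cross$ by a $\bump$ uncrosses pipes $a$ and $b$ while leaving all other pipes unchanged, so $P\in \PD(t_{ab}t_{ab}w) = \PD(w)$ and $|P(\cross)| = |Q(\cross)| - 1 = \ell(w)$, confirming $P$ is reduced. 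First I would check that $(P,p)\in \calP_0$: by construction $\west_p = a$ and $\south_p = b$ in $P$, and since $w\lessdot_\pi t_{ab}w$, Proposition \ref{prop:cover lemma} gives $(a,b)\in \Inv(\pi w^{-1})$, i.e. $(\west_p,\south_p)\in \Inv(\pi w^{-1})$; moreover $p\in P(\bump)$ by construction, and $p\in P(\pi)$ because $\Inv(\pi w^{-1})\sseq \coInv(w^{-1})$ together with the crossing data forces $\row_p \le \clambda_{\pi w^{-1}(b)}$ (the same computation already used in the construction of $\Phi_0$, run in reverse). Hence $(P,p)\in \calP_0$.

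Next I would verify $\Phi_0\circ \Psi = \id$ and $\Psi\circ \Phi_0 = \id$. For a pair $(P,p)\in \calP_0$ with $a = \west_p$, $b = \south_p$, the pipe dream $Q = \Phi_0(P,p)$ is $P$ with the $\bump$ at $p$ replaced by a $\cross$; this $\cross$ is precisely where pipes $a$ and $b$ now cross (they did not cross in $P$ since $(a,b)\in \Inv(\pi w^{-1})\sseq \coInv(w^{-1})$, and the new $\cross$ at $p$ makes them cross there, hence by reducedness $p$ is their unique crossing). So $\Psi(Q) = (P,p)$, which is $\Psi\circ\Phi_0 = \id$. Conversely, for $Q\in \PD(t_{ab}w)$ with $\Psi(Q) = (P,q)$ where $q$ is the crossing of pipes $a,b$, applying $\Phi_0$ replaces the $\bump$ at $q$ back with a $\cross$, recovering $Q$; one must note that $\Phi_0(\Psi(Q))$ indeed lands in $\PD(t_{cd}w)$ for the correct transposition, but since in $P$ we have $\west_q = a$ and $\south_q = b$, the recipe for $\Phi_0$ uses exactly the transposition $t_{ab}$, so $\Phi_0(\Psi(Q)) = Q$. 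This gives $\Phi_0\circ \Psi = \id$, completing the bijection.

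The main obstacle — though a mild one — is bookkeeping around which transposition is being applied and ensuring the codomain indexing matches: one needs $w\lessdot_\pi t_{ab}w$ to be recovered correctly from $Q$, and conversely that every cover $w\lessdot_\pi t_{ab}w$ is hit, which follows because any $Q\in \PD(t_{ab}w)$ has a unique $a$–$b$ crossing to delete. A secondary point requiring care is the reducedness of $P = \Psi(Q)$: removing a $\cross$ from a reduced pipe dream always yields a reduced pipe dream, since no pair of pipes can acquire a second crossing. Both points are routine once stated, so the bulk of the proof is simply assembling the verifications above; there is no genuinely hard step, the content having already been front-loaded into the construction of $\Phi_0$ and into Proposition \ref{prop:cover lemma}.
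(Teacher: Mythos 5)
Your proposal is correct and follows essentially the same route as the paper: delete the unique $a$--$b$ crossing in $Q$, check the resulting pair lies in $\calP_0$ via Proposition~\ref{prop:cover lemma}, and observe the two operations invert each other. The one place where your reasoning is imprecise is the verification that $p\in P(\pi)$: there is no ``computation already used in the construction of $\Phi_0$'' to reverse (membership in $P(\pi)$ is built into the definition of $\calP_0$, not re-derived there), and the cleanest justification is the one the paper uses, namely that $p\in Q(\cross)\subseteq Q(\pi)$ by property (D.2) of Proposition~\ref{prop:dominant pos} and then $\pi(t_{ab}w)^{-1}(\south_{Q,p}) = \pi w^{-1}(\south_{P,p})$; your alternative chain of inequalities via $(w^{-1}(a),w^{-1}(b))\in\Inv(\pi)$ and $\row_p \leq w^{-1}(a)$ also works, but you should spell it out rather than gesture at it.
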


\begin{proof}
Suppose we are given $R\in \PD(t_{ab}w)$ for some $w,t_{ab}w\leq_L \pi$.
Pipes $a$ and $b$ must cross in $R$ at some position $p$, so we can replace this $\cross$ with a $\bump$.
Since $p\in R(\cross)$, we have $\row_p \leq \clambda_{\pi (t_{ab}w)^{-1}(\south_{R,p})} = \clambda_{\pi w^{-1}t_{ab}(a)} = \clambda_{\pi w^{-1}(b)} = \clambda_{\pi w^{-1}(\south_{P,p})}$, so indeed $p\in P(\pi)$ and $(P,p)\in \DeltaPD$.
Lemma \ref{prop:cover lemma} tells us that $(a,b)\in \Inv(\pi w^{-1})$, therefore $(P,p)\in \calP_0$.
This procedure clearly gives an inverse to $\Phi_0$.
\end{proof}

It remains to show ($\Phi$.1a) and ($\Phi$.1b).
These will be granted by the next proposition:

\begin{proposition}
\label{prop:Phi_A/B bijection}
Let $Q\in \PD(t_{ab}w)$ for some $w\lessdot_\pi t_{ab}w$.
Then the maps \[
    \Phi_A^{-1}(Q)\to A(w,t_{ab}w)
    \qquad\text{and}\qquad
    \Phi_B^{-1}(Q)\to B(w,t_{ab}w)
\] given by sending a pair $(P,p)$ to the value $k$ determined in Definition \ref{def:Phi_A/B} are bijections.
\end{proposition}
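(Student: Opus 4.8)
The plan is to treat the two cases $*\in\{A,B\}$ uniformly, constructing an explicit inverse to the map $(P,p)\mapsto k$. Fix $Q\in\PD(t_{ab}w)$ with $w\lessdot_\pi t_{ab}w$. First I would pin down, purely in terms of $Q$, $w$, and $\pi$, which indices $k$ can arise: by Definition \ref{def:Phi_A/B}, if $(P,p)\in\Phi_*^{-1}(Q)$ has associated index $k$, then tracing the chain $(P_0,p_0)\to\cdots\to(P_m,p_m)$ backwards, pipe $k$ is untouched by every $\phi_{*,k}$ move (Proposition \ref{prop:phi props}(i) gives $\south_{Q,q}\neq k$ along the way, and the slide/swap moves only ever move bumps weakly northwest of pipe $k$), so pipe $k$ in $Q$ agrees with pipe $k$ in $P$. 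Reading off the one-line notations, this forces $k>b$ together with the membership conditions: for $*=A$, that $w^{-1}(a)<w^{-1}(k)<w^{-1}(b)$, i.e. $k\in A(w,t_{ab}w)$; for $*=B$, that $\pi w^{-1}(b)<\pi w^{-1}(k)<\pi w^{-1}(a)$, i.e. $k\in B(w,t_{ab}w)$. This gives the forward map.

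For the inverse, given $k\in A(w,t_{ab}w)$ (resp. $k\in B(w,t_{ab}w)$), I would run the construction in reverse. Starting from $Q=\Phi_0(P_m,p_m)$, Proposition \ref{prop:Phi_0} recovers $(P_m,p_m)\in\calP_0$ uniquely: the bump $p_m$ sits where pipes $a$ and $b$ cross in $Q$. The key point is that $\phi_{*,k}$ is invertible on its image once $k$ is known. Concretely: given a pair $(R,r)\in\calP_0$ (or more generally a pair that is \emph{not} $(*,k)$-aligned but arises as some $\phi_{*,k}(R',r')$), reverse-sliding moves the bump one step southeast along its row and, if the preceding step was a swap, first un-swaps; because the swap in the forward direction is detected by ``$\slide(P,p)$ is $(*,k)$-aligned and swappable,'' the correct choice of un-swap-then-reverse-slide versus just reverse-slide is forced by demanding that the output again be $(*,k)$-aligned. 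I would make this precise by showing that for a $(*,k)$-aligned pair $(R',r')$ with $\phi_{*,k}(R',r')=(R,r)$, the pair $(R',r')$ is uniquely determined by $(R,r)$ and $k$ — this is essentially the observation that $\slide$ is injective (its inverse shifts the bump back east along the row to the unique position with the same west-pipe) and that $\swap$ is an involution on the relevant pairs. Iterating reverse-$\phi_{*,k}$ must terminate, since $\Sigma_{*,k}$ strictly \emph{grows} at each reverse step (Proposition \ref{prop:phi props}(ii) read backwards) but is bounded: $\Sigma_{*,k}$ is contained in the finite region northwest of pipes $\west$ and $k$ (and, for $*=B$, weakly above row $\clambda_{\pi w^{-1}(k)}$), and the terminal pair is detected as the unique one that is $(A,k)$-aligned (resp. $(B,k)$-aligned) but cannot be a $\phi_{*,k}$-image — then one final step ($\swap$ back into $\calP_A$, or doing nothing for $\calP_B$) recovers $(P,p)\in\calP_*$. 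That these reverse steps never leave the respective $\calP_*$ and always keep a well-defined $(*,k)$-aligned chain is where the alignment conditions involving $\pi$ (and the no-$132$-pattern property of $\pi$, used in Proposition \ref{prop:phi props}) do the work.

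I expect the main obstacle to be showing that the reverse procedure is \emph{well-defined and terminating} — i.e. that at each reverse step there is exactly one legal predecessor and that the chain eventually reaches a genuine element of $\calP_*$ rather than getting stuck or running forever. Forward, Corollary \ref{cor:phi terminates} handled termination via the descending chain condition on $\Sigma_{*,k}$; backward I need the \emph{ascending} version plus a uniform bound, which is why I would first establish that $\Sigma_{*,k}(P_i,p_i)$ for every $i$ lies inside a fixed finite rectangle determined by $k$ and the initial pipe-$\west$ position of $(P,p)$ (equivalently by the positions of pipes $a,b,k$ in $Q$). The rest — that the forward and reverse maps are mutually inverse, and that the index $k$ read off really lands in $A$ resp. $B$ — then follows by unwinding the definitions and the one-line-notation bookkeeping above, using Proposition \ref{prop:cover lemma} to keep track of the covering relation $w\lessdot_\pi t_{ab}w$ throughout. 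Finally, ($\Phi$.1a) and ($\Phi$.1b) are immediate from the bijections just constructed, completing the proof of Theorem \ref{thm:delta pi} together with Propositions \ref{prop:Phi weights} and \ref{prop:Phi_0}.
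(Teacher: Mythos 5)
Your overall strategy matches the paper's: invert $\Phi_0$ to get $(Q_0,q_0)$, iterate the inverse of $\phi_{*,k}$, use the $\Sigma_{*,k}$ sets (now growing) for termination, and finish with an un-swap (for $\calP_A$) or nothing (for $\calP_B$). But there is a genuine gap at the step you flag as the ``main obstacle'' and then characterize as essentially routine. The existence and uniqueness of a $(*,k)$-aligned preimage under $\phi_{*,k}$ is precisely the content of Lemma~\ref{lem:Phi_A/B inverse}, and it is \emph{not} ``essentially the observation that $\slide$ is injective and $\swap$ is an involution.'' The paper's proof of that lemma splits into two conditions (whether $(Q,q)$ is already in $\calP_0$ or is $(*,k)$-aligned), and the second condition splits further into three geometric cases, several of which lean on the $132$-avoidance of $\pi$ and on careful pipe-crossing arguments to verify that the constructed preimage is actually $(*,k)$-aligned (existence) and that the other candidate (reverse-slide without un-swap, or vice versa) cannot also produce an aligned pair (uniqueness). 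Your proposal never explains \emph{why} a $(*,k)$-aligned preimage exists at each backward step, nor does it identify the correct terminal criterion (the reverse chain stops precisely when $\south_q = k$, which is why Proposition~\ref{prop:phi props}(i) is invoked). Saying the choice between un-swap-then-reverse-slide and just reverse-slide is ``forced by demanding the output be aligned'' names the desired conclusion rather than proving it.

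Your forward-direction argument (that $k$ lands in $A(w,t_{ab}w)$ resp.\ $B(w,t_{ab}w)$) is also not correct as stated. For $*=A$, the first step $(P,p)\mapsto (P_0,p_0)=\swap(P,p)$ swaps a segment of pipe $k=\west_{P,p}$ with a segment of pipe $\south_{P,p}$, so pipe $k$ in $Q$ does \emph{not} ``agree with pipe $k$ in $P$.'' And even granting that pipe $k$ is fixed along the $\phi_{*,k}$-chain, that observation alone does not force the inequalities defining membership in $A$ or $B$. The paper's Lemma~\ref{lem:Phi_A/B image} instead analyzes the \emph{final} step of the chain: $(P_m,p_m)$ fails to be $(*,k)$-aligned and must equal $\slide(P_{m-1},p_{m-1})$, and the failure can only be in the inversion/coinversion condition involving $\west_{p_m} = a$ and $k$; combined with the alignment of $(P_m,p_{m-1})$ involving $\west_{p_{m-1}}=b$ and $k$, this pins down $a<b<k$ and the one-line inequalities. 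That targeted analysis of the terminal step is what your proposal is missing.
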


In turn, Proposition \ref{prop:Phi_A/B bijection} will follow from two lemmas, the first showing that the above map is well-defined:

\begin{lemma}
\label{lem:Phi_A/B image}
With the hypotheses and notation from Definition \ref{def:Phi_A/B}, we have $k\in *(w,t_{ab}w)$.
\end{lemma}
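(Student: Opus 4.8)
The plan is to analyze the pair $(P_0,p_0)$ and the chain $(P_0,p_0)\to\cdots\to(P_m,p_m)$ produced by iterating $\phi_{*,k}$, and to identify the index $k$ in terms of the final transposition $t_{ab}$. Throughout, $Q=\Phi_0(P_m,p_m)$, so by the construction of $\Phi_0$ the pipes $\west_{P_m,p_m}$ and $\south_{P_m,p_m}$ are the two pipes crossed when passing to $Q$; write $a=\west_{P_m,p_m}$ and $b=\south_{P_m,p_m}$, so that $Q\in\PD(t_{ab}w)$ with $(a,b)\in\Inv(\pi w^{-1})$ (Proposition \ref{prop:cover lemma}). First I would record what is preserved along the chain: by Proposition \ref{prop:phi props}(i), $\south_{P_i,p_i}\neq k$ for every $i\geq 1$, and $\west_{P_{i+1},p_{i+1}}\leq\west_{P_i,p_i}$; moreover none of $\slide$ or $\swap$ changes the underlying permutation $w$, and (by inspecting the two cases in the proof of Proposition \ref{prop:phi props}) the pipe labelled $k$ is never one of the two pipes whose segments get exchanged, so pipe $k$ occupies the same cells throughout the chain. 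In particular $b=\south_{P_m,p_m}\neq k$.

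Next I would pin down the roles of $a$, $b$, and $k$. For $*=B$: here $(P_0,p_0)=(P,p)$ and $k=\south_{P,p}$, while $(P_0,p_0)\in\calP_B$ means $(\west_{P,p},k)\in\coInv(\pi w^{-1})$, i.e. $\pi w^{-1}(k)<\pi w^{-1}(\west_{P,p})$, together with $(\row_p,w^{-1}(k))\in\Inv(\pi)$. I claim $b=k$ — wait, that contradicts the previous paragraph, so instead the relevant claim is $a=k$: since $\west$ only decreases along the chain and $\phi_{*,k}$-steps keep the pair $(*,k)$-aligned until the last one, one tracks that the ``$\west$'' pipe of the final pair is exactly $k$ itself (this is visible in Figure \ref{fig:Phi_B}). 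Thus $a=k$ and $b=\south_{P_m,p_m}$ is some index with $b<k$. Then $k\in B(w,t_{ab}w)$ amounts to checking $k>b$ (clear, $k=a>b$ is false — rather $b<k=a$, and the defining condition of $B$ is $k>b$ with $\pi w^{-1}(b)<\pi w^{-1}(k)<\pi w^{-1}(a)$; with $a=k$ this degenerates, so the correct reading is that the map sends $(P,p)$ to $k=\south_{P,p}$ and one must instead show $\south_{P,p}\in B(w,t_{ab}w)$ directly). The honest route: from $(P,p)\in\calP_B$ we have $(\west_p,\south_p)\in\coInv(\pi w^{-1})$, and one shows that along the chain the pipe $\west$ decreases to reach, at the terminal non-aligned pair, a pipe $a$ with $\pi w^{-1}(\south_p)<\pi w^{-1}(a)$ preserved, plus $w^{-1}(\south_p)<w^{-1}(a)$ lost exactly at the last step — combined with the alignment condition $(\row_p,w^{-1}(\south_p))\in\Inv(\pi)$ this forces $\pi w^{-1}(b)<\pi w^{-1}(k)<\pi w^{-1}(a)$ and $k=\south_p>b$, which is precisely $k\in B(w,t_{ab}w)$.

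For $*=A$: $(P,p)\in\calP_A$ means $\south_p<\west_p$, and $k=\west_{P,p}$ with $(P_0,p_0)=\swap(P,p)$; here $p_0$ is the crossing of pipes $\south_p$ and $\west_p=k$, and one checks $(A,k)$-alignment of $(P_0,p_0)$ as stated in the construction. The terminal pair gives $a=\west_{P_m,p_m}\leq k$ and $b=\south_{P_m,p_m}\neq k$; since the chain never moves pipe $k$ and only decreases $\west$, one argues $a<k<w^{-1}$-position-wise so that $w^{-1}(a)<w^{-1}(k)<w^{-1}(b)$, giving $k\in A(w,t_{ab}w)$ once $k>b$ is verified (which follows because $b=\south_{P_m,p_m}$ and the terminal pair being in $\calP_0$ forces $(a,b)\in\Inv(\pi w^{-1})$, hence $\ell$-compatibility with $w^{-1}(b)$ large). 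I expect the main obstacle to be exactly this bookkeeping: proving that the pair of pipes exchanged by $\Phi_0$ at the end of the chain is $\{a,k\}$-compatible in the precise inequality sense demanded by $A$ and $B$ — i.e. propagating the alignment inequalities through the alternating $\slide$/$\swap$ steps and showing they survive (or get newly created) exactly at the terminal step. I would isolate this as a sub-lemma: for a $(*,k)$-aligned pair $(P,p)$, $\phi_{*,k}(P,p)$ is either again $(*,k)$-aligned or lies in $\calP_0$ with $\west$ having dropped (Proposition \ref{prop:phi props}), and then the inequalities relating $k$ to $\west_{P_m,p_m}$ and $\south_{P_m,p_m}$ follow by combining the ``not $(*,k)$-aligned'' failure clause analyzed in the proof of Proposition \ref{prop:phi props}(iii) with the fact that $\pi$ is dominant (no $132$-pattern), exactly as in that proof.
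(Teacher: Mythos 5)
Your proposal never lands on the one structural observation that drives the paper's proof, and along the way it asserts something false that you only partially retract. Specifically, the claim ``$a=k$'' is wrong: Proposition~\ref{prop:phi props}(i) gives $\south_{P_i,p_i}\neq k$ for all $i\geq 1$, and since the terminal pair lies in $\calP_0$ we get $a<b$ with $b\neq k$, while the alignment inequalities force $b<k$, so in fact $a<b<k$ and $a\neq k$. (Your own Figure~\ref{fig:Phi_B} example has $a=1$, $b=4$, $k=7$.) Your ``honest route'' then keeps inequalities in the wrong form --- e.g.\ $\pi w^{-1}(\south_p)<\pi w^{-1}(a)$ ``preserved'' is not the quantity that propagates; what propagates along the $(B,k)$-aligned chain is $\pi w^{-1}(\west_{P_i,p_i})<\pi w^{-1}(k)$, and at the terminal step this inequality \emph{flips} for $a=\west_{P_m,p_m}$ rather than being preserved.

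The missing idea is short: the terminal step must be a pure $\slide$ (a step of the form $\swap\circ\slide$ always outputs a $(*,k)$-aligned pair by construction), so $P_m=P_{m-1}$ and $p_m$ lies directly west of $p_{m-1}$ in the same row, giving $b=\south_{P_m,p_m}=\west_{P_m,p_{m-1}}$. From here both inequalities in the definition of $*(w,t_{ab}w)$ drop out with no further bookkeeping: the $(*,k)$-alignment of $(P_m,p_{m-1})$ gives the $(b,k)$ inequality, and the \emph{failure} of $(*,k)$-alignment at $(P_m,p_m)$ (which, since $p_m$ is still northwest of pipe $k$ and in the same row as $p_{m-1}$, can only be the $\Inv$/$\coInv$ clause) gives the $(a,k)$ inequality. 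Without this two-line observation your argument has no way to produce the second inequality, and the appeal to ``propagating alignment inequalities'' and a future sub-lemma is a gap, not a proof.
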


\begin{proof}
For $(P_m,p_m)$ to not be $(*,k)$-aligned, we must have that $(P_m,p_m) = \slide(P_{m-1},p_{m-1})$, so $P_m = P_{m-1}$ and $p_m$ is the first bump due west of $p_{m-1}$.
In particular, we get $a < b < k$, since $b = \west_{P_m,p_{m-1}}$ and $(P_m,p_{m-1})$ is $(*,k)$-aligned.

Suppose $(P,p)\in \calP_A$.
Notice that $(P_m,p_{m-1})$ being $(A,k)$-aligned tells us that $(b,k)\in \Inv(w^{-1})$.
Also, $(P_m,p_m)$ not being $(A,k)$-aligned means that $(a,k)\in \coInv(w^{-1})$ (since $p_m$ is northwest of pipe $k$).
Putting these together, we get $a < b < k$ and $w^{-1}(a) < w^{-1}(k) < w^{-1}(b)$, so $k\in A(w,t_{ab}w)$.

Suppose $(P,p)\in \calP_B$.
Since $(P_m,p_{m-1})$ is $(B,k)$-aligned, $(b,k)\in \coInv(\pi w^{-1})$.
Since $(P_m,p_m)$ is not $(B,k)$-aligned, $(a,k)\in \Inv(\pi w^{-1})$ (because $p_m$ is northwest of pipe $k$ and $\row_{p_m} = \row_{p_{m-1}} \leq \clambda_j$).
Thus $a < b < k$ and $\pi w^{-1}(b) < \pi w^{-1}(k) < \pi w^{-1}(a)$, so $k\in B(w,t_{ab}w)$.
\end{proof}

The next lemma forms the heart of Proposition \ref{prop:Phi_A/B bijection} by allowing us to construct an inverse to each map $\Phi_*^{-1}(Q) \to *(w,t_{ab}w)$.

\begin{lemma}
\label{lem:Phi_A/B inverse}
Let $*\in \{A,B\}$ and $k\geq1$.
Suppose that $Q\in \PD(w)$ and $q\in Q(\bump)$ satisfy one of the following conditions holds \textup{(}writing $a = \west_{Q,q}$, $b = \south_{Q,q}$\textup{)}:
\begin{itemize}
    \item
    $(Q,q)\in \calP_0$ and $k\in *(w,t_{ab}w)$.
    \item
    $(Q,q)$ is $(*,k)$-aligned and $b\neq k$.
\end{itemize}
Then there is a unique $(*,k)$-aligned pair $(P,p)$ such that $\phi_{*,k}(P,p) = (Q,q)$.
\end{lemma}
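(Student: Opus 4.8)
The approach is to reverse the operator $\phi_{*,k}$ by an explicit construction, with the case split governed by whether $(Q,q)$ is swappable. Write $a=\west_{Q,q}$ and $b=\south_{Q,q}$. First I record a dichotomy implicit in the proof of Proposition~\ref{prop:phi props}: a pair produced by $\phi_{*,k}$ via a slide alone is never swappable---if it lies in $\calP_0$ then its pair $(\west,\south)$ at the marked cell lies in $\Inv(\pi w^{-1})\sseq\coInv(w^{-1})$ and so those pipes do not cross, and otherwise $\slide(P,p)$ was itself not swappable---whereas a pair produced via a slide followed by a swap is always swappable, since the two pipes bouncing at the marked cell crossed before the swap and therefore still cross after. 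Hence any preimage of $(Q,q)$ under $\phi_{*,k}$ is forced to reverse a bare slide when $(Q,q)$ is not swappable, and to reverse a slide-then-swap when it is.

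\emph{Construction.} If $(Q,q)$ is not swappable---which holds automatically under the first hypothesis, because $(Q,q)\in\calP_0$ together with Proposition~\ref{prop:weak order} (using $w\le_L\pi$) forces $\west_q<\south_q$ and $w^{-1}(\west_q)<w^{-1}(\south_q)$, so pipes $\west_q,\south_q$ do not cross---set $P:=Q$ and let $p$ be the first bump of $Q$ strictly east of $q$ in row $\row_q$. Tracing pipe $b=\south_q$ eastward from $q$ shows that the tiles strictly between $q$ and $p$ are crosses carrying pipe $b$, that pipe $b$ turns from south to west at $p$ so $\west_p=b$, and hence that $(P,p)$ is slidable with $\slide(P,p)=(Q,q)$; since $(Q,q)$ is not swappable, $\phi_{*,k}(P,p)=\slide(P,p)=(Q,q)$ as soon as $(P,p)$ is known to be $(*,k)$-aligned. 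If instead $(Q,q)$ is swappable, set $(P,p'):=\swap(Q,q)$---this is legitimate and reversible since $\swap$ is an involution (exchanging the bump and cross a second time restores $(Q,q)$)---and let $p$ be the first bump of $P$ strictly east of $p'$ in row $\row_{p'}$, so $\slide(P,p)=(P,p')$ and $\swap(\slide(P,p))=(Q,q)$; then $\phi_{*,k}(P,p)=(Q,q)$ provided both $(P,p')$ and $(P,p)$ are $(*,k)$-aligned.

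\emph{Verifying $(*,k)$-alignment.} This is the technical heart, run by the positional bookkeeping of Proposition~\ref{prop:phi props} in reverse. Under the first hypothesis with $*=A$: the inequalities $k>b$ and $w^{-1}(a)<w^{-1}(k)<w^{-1}(b)$ give $(\west_p,k)=(b,k)\in\Inv(w^{-1})$ immediately, and one then shows that the unique crossing of pipes $b$ and $k$ occurs in a row below $\row_p$, which by Lemma~\ref{lem:aligned is slidable} is exactly the assertion that $p$ lies weakly northwest of pipe $k$. With $*=B$ the reasoning is parallel, but pinning down pipe $k$ and securing the extra row bound $\row_p\le\clambda_{\pi w^{-1}(k)}$ uses that $\pi$ is dominant, hence $132$-avoiding. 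Under the second hypothesis one instead propagates the $(*,k)$-alignment of $(Q,q)$---respectively of $(P,p')=\swap(Q,q)$, whose $(*,k)$-alignment is the Case~2 computation in Proposition~\ref{prop:phi props} read backwards---along the eastward reverse slide; here the hypothesis $\south_q=b\neq k$ is precisely what keeps pipe $k$ from slipping in between the marked bump and its image, so that the northwest position and the relevant $(\,\cdot\,,k)$-inversion datum survive the slide.

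\emph{Uniqueness and the main obstacle.} Uniqueness is immediate from the dichotomy: two $(*,k)$-aligned preimages of $(Q,q)$ use the same branch, and within each branch the preimage is completely determined---by $P=Q$ and $p$ the first bump east of $q$ in the non-swappable case, and by $(P,p')=\swap(Q,q)$ and $p$ the first bump east of $p'$ in the swappable case---so the two preimages coincide. The main obstacle is the $(*,k)$-alignment verification, and specifically the geometric claim that pipe $k$ stays weakly southeast of the marked bump across the reverse slide; this demands the careful case analysis sketched above, with the dominance of $\pi$ entering only to exclude $132$-patterns in the $*=B$ arguments.
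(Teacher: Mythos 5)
The central claim driving your case split is wrong. You assert the dichotomy: ``a pair produced by $\phi_{*,k}$ via a slide alone is never swappable,'' and conclude that any preimage of a swappable $(Q,q)$ must reverse a slide-then-swap, while any preimage of a non-swappable $(Q,q)$ must reverse a bare slide. This fails. Read the branch condition in the definition of $\phi_{*,k}$ the way the paper actually uses it throughout---namely, apply $\swap\circ\slide$ only when the \emph{result} $\swap(\slide(P,p))$ is still $(*,k)$-aligned (this is precisely what is invoked in the proof of Proposition~\ref{prop:phi props}, Case~1, and in the ``vacuous'' remark for part (iii), and it is what makes Corollary~\ref{cor:phi terminates} go through). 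Under this reading there is a third possibility you omit: $\slide(P,p)$ is aligned and swappable, but $\swap(\slide(P,p))$ is \emph{not} aligned (its new marked bump escapes the region northwest of pipe~$k$, or violates the row bound when $*=B$). Then $\phi_{*,k}(P,p)=\slide(P,p)$ even though $\slide(P,p)$ is swappable. In other words, a bare-slide output can be swappable, and your dichotomy breaks.

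Concretely, this missing case is exactly Case~2 in the paper's proof of Lemma~\ref{lem:Phi_A/B inverse}: $(Q,q)$ is $(*,k)$-aligned and swappable, but writing $(P,p):=\swap(Q,q)$, the position $p$ is not northwest of pipe~$k$ (or, for $*=B$, $\row_p>\clambda_j$). In that situation your construction sets $(P,p'):=\swap(Q,q)$ and slides east, but $(P,p')$ is not $(*,k)$-aligned, so the pair $(P,\tilde p)$ you produce is not even a legal input to $\phi_{*,k}$; the paper's uniqueness argument in Case~2 shows explicitly that no aligned $\tilde p$ with $\swap(\slide(P,\tilde p))=(Q,q)$ exists. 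The actual unique preimage there is the bare-slide one $(Q,q')$, which your construction never considers when $(Q,q)$ is swappable. Your uniqueness argument inherits the same flaw: you claim ``two $(*,k)$-aligned preimages of $(Q,q)$ use the same branch,'' determined by whether $(Q,q)$ is swappable, but in Case~2 the branch used is the one your rule forbids. Fixing this requires replacing your two-way split by the paper's three-way split within Condition~2 (swappable with $\swap(Q,q)$ aligned; swappable with $\swap(Q,q)$ not aligned; not swappable), together with the argument that in the new middle case one has $a<b<k$ and the preimage is the bare slide $(Q,q')$.
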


\begin{proof}
Write $i = \row_q$ and $j = \pi w^{-1}(k)$.
We break up the proof of the lemma into two parts depending on which condition $(Q,q)$ satisfies.

\textbf{Condition 1.}
Suppose $(Q,q)\in \calP_0$ and $k\in *(w,t_{ab}w)$.
We claim that both $(a,b),(a,k)\in \coInv(w^{-1})$, meaning pipe $a$ crosses neither pipe $b$ nor pipe $k$ in $Q$.
This is clear if $* = A$, since $a < b < k$ 
and $w^{-1}(a) < w^{-1}(k) < w^{-1}(b)$.
If $* = B$, so $a < b < k$ and $\pi w^{-1}(b) < \pi w^{-1}(k) < \pi w^{-1}(a)$, then this follows from $(a,b),(a,k)\in \Inv(\pi w^{-1})\sseq \coInv(w^{-1})$.

Pipe $k$ does not cross pipe $b$ in the first $i$ rows---if it did, then it would necessarily cross pipe $a$ as well (since pipes $a$ and $b$ bump at $q$).
Hence if $p\in Q(\bump)$ denotes the bump east of $q$ with $\west_{Q,p} = b$ and $\row_p = i$, then $p$ lies northwest of pipe $k$.
Since $(b,k)$ lies in $\Inv(w^{-1})$ (when $* = A$) or $\coInv(\pi w^{-1})$ (when $* = B$), $(Q,p)$ is $(*,k)$-aligned.
Furthermore, $\phi_{*,k}(Q,p) = \slide(Q,p) = (Q,q)$ since pipes $a$ and $b$ do not cross.

For uniqueness, note that if $(Q,q) = \phi_{*,k}(\tilde P,\tilde p)$ for a $(*,k)$-aligned pair $(\tilde P, \tilde p)$, then we must have $(Q,q) = \slide(\tilde P, \tilde p)$ since pipes $a$ and $b$ do not cross.
Since $\slide$ is 1-to-1, we conclude $(\tilde P,\tilde p) = (P,p)$.

\textbf{Condition 2.}
Suppose $(Q,q)$ is $(*,k)$-aligned and $b\neq k$.
Then either $(a,k)\in \Inv(w^{-1})\sseq \coInv(\pi w^{-1})$ (if $* = A$) or $(a,k)\in \coInv(\pi w^{-1})$ (if $* = B$).
Either way $a < k$, so pipes $a$ and $k$ do not cross at or above row $i$.

We further break-up the proof of this part into three separate cases.

\textbf{Case 1.}
Suppose that $(Q,q)$ is swappable and, writing $(P,p) := \swap(Q,q)$, that $p$ is northwest of pipe $k$.
Further assume that $\row_p \leq \clambda_j$ if $* = B$.
Let $p'$ denote the position obtained as follows:
\begin{itemize}
    \item
    If $\row_p < i$, then $\west_{Q,p} = a$.
    Since pipes $a$ and $k$ do not cross in row $\row_p$, there must be some $p'\in Q(\bump)$ northwest of pipe $k$ with $\west_{Q,p'} = a$ and $\row_{p'} = \row_p$.
    \item
    If $\row_p > i$, then $\west_{Q,p} = b$.
    Since $q$ and $p$ are each northwest of pipe $k$, pipe $k$ cannot cross the segment of pipe $b$ from $q$ to $p$.
    Hence there is some $p'\in Q(\bump)$ northwest of pipe $k$ with $\west_{Q,p'} = b$ and $\row_{p'} = \row_p$.
\end{itemize}
In either case, it can be checked that $\west_{P,p'} = \south_{P,p} = a$, so $(P,p')$ is $(*,k)$-aligned.
Furthermore, $\slide(P,p') = (P,p)$ is swappable and $\swap(P,p) = (Q,q)$ is $(*,k)$-aligned, so $\phi_{*,k}(P,p') = (Q,q)$.

For uniqueness, suppose that $(Q,q) = \slide(Q,\tilde p)$ for some $\tilde p\in Q(\bump)$ which is $(*,k)$-aligned in $Q$.
Then $\west_{Q,\tilde p} = b = \west_{P,p}$.
Since $(Q,\tilde p)$ is $(*,k)$-aligned, $(\west_{P,p},k) = (\west_{Q,\tilde p},k)$ lies in $\Inv(w^{-1})$ (for $* = A$) or $\coInv(\pi w^{-1})$) (for $* = B$).
This means $(P,p)$ is $(*,k)$-aligned, so $\phi_{*,k}(Q,\tilde p) = (P,p) \neq (Q,q)$: a contradiction.
Hence if $(Q,q) = \phi_{*,k}(\tilde P,\tilde p)$ for a $(*,k)$-aligned pair $(\tilde P,\tilde p)$, then we must have $(Q,q) = \swap(\slide(\tilde P,\tilde p))$.
Since $\slide$ and $\swap$ are each 1-to-1, we conclude $(\tilde P,\tilde p) = (P,p')$.

\textbf{Case 2.}
Suppose that $(Q,q)$ is swappable but, writing $(P,p) := \swap(Q,q)$, either $p$ is not northwest of pipe $k$, or $* = B$ and $\row_p > \clambda_j$.
This implies $a < b$, since otherwise $\row_p < i$ and we know pipe $a$ stays left of pipe $k$ in the first $i$ rows.

We claim that we also have $b < k$.
Suppose instead that $b > k$.
Then pipes $b$ and $k$ must cross above row $i$, since $q$ lies northwest of pipe $k$.
Then $(k,b)\in \Inv(w^{-1})$ and so $(w^{-1}(b),w^{-1}(k))\in \Inv(w)\sseq \Inv(\pi)$.
On the other hand, pipe $b$ stays left of pipe $k$ in rows below $i$ and $\row_p > i$, so $p$ is northwest of pipe $k$.
By hypothesis, we must have $* = B$ and $\row_p > \clambda_j$.
In particular, $w^{-1}(b) \geq \row_p > \clambda_j$, so $(w^{-1}(b),j)\notin D(\pi)$ (since $\pi$ is dominant).
This implies $(w^{-1}(b),\pi^{-1}(j)) = (w^{-1}(b),w^{-1}(k))\notin \Inv(\pi)$ which contradicts our previous observation.

To recap, we have shown that $a < b < k$.
In this scenario, pipes $b$ and $k$ do not cross at or above row $i$, so pipe there is some $q'\in Q(\bump)$ northwest of pipe $k$ with $\west_{Q,q'} = b$ and $\row_{q'} = i$.

We are left to show that $q'$ is $(*,k)$-aligned in $Q$---given this, since $\swap(Q,q) = (P,p)$ is not $(*,k)$-aligned, we would get $\phi_{*,k}(Q,q') = \slide(Q,q') = (Q,q)$.
So, for the sake of contradiction, suppose that $q'$ is not $(*,k)$-aligned in $Q$.
\begin{itemize}
\item 
If $* = A$, then this means $(b,k)\in \coInv(w^{-1})$, so pipes $b$ and $k$ do not cross in $Q$.
Since $(Q,q)$ is $(A,k)$-aligned, $(a,k)\in \Inv(w^{-1})$ and so pipes $a$ and $k$ do cross in $Q$.
But $a < b < k$, so this is absurd.
\item 
If $* = B$, then we must have $(b,k)\in \Inv(\pi w^{-1}) \sseq \coInv(w^{-1})$.
Then $(w^{-1}(b),w^{-1}(k))\in \Inv(\pi)$ so $(w^{-1}(b),\pi w^{-1}(k)) = (w^{-1}(b),j)\in D(\pi)$.
Since $\pi$ is dominant, $w^{-1}(b)\leq \clambda_j$.
On the other hand, $p$ lies on pipe $b$, so $w^{-1}(b)\geq \row_p > \clambda_j$, giving a contradiction.
\end{itemize}

For uniqueness, suppose $(Q,q) = \swap(\slide(P,\tilde p))$ for some $\tilde p\in P(\bump)$ which is $(*,k)$-aligned in $P$.
This means $\slide(P,\tilde p) = (P,p)$.
If $p$ is not northwest of pipe $k$, or (when $* = B$) if $\row_p > \clambda_j$, then the same would be true of $\tilde p$.
This contradicts that $\tilde p$ is $(*,k)$-aligned in $P$, so no such $\tilde p$ exists.
Hence if $(Q,q) = \phi_{*,k}(\tilde P,\tilde p)$ for some $(*,k)$-aligned pair $(\tilde P,\tilde p)$, then $(Q,q) = \slide(\tilde P,\tilde p)$.
Again, this forces $(\tilde P,\tilde p) = (Q,q')$.

\textbf{Case 3.}
Finally, suppose that $(Q,q)$ is not swappable, so pipes $a$ and $b$ do not cross in $Q$.
We claim that $a < b < k$, as in the previous case.
Given this, we would then be able to find some $q'\in Q(\bump)$ northwest of pipe $k$ with $\west_{Q,q'} = b$ and $\row_{q'} = i$.
The verbatim argument from the previous case would then show that $q'$ is $(*,k)$-aligned in $Q$.
Since $\slide(Q,q') = (Q,q)$ is not swappable, we would then conclude $\phi_{*,k}(Q,q') = (Q,q)$.

We now show the claim.
The argument that $a < b$ is the same as one given before.
If $b > k$, then (as before) pipes $b$ and $k$ must cross above row $i$.
In particular, $(k,b)\in \Inv(w^{-1})\sseq \coInv(\pi w^{-1})$.
We know pipe $k$ does not cross pipe $a$ at or above row $i$, and it also cannot cross pipe $a$ below row $i$ since it would need to cross pipe $b$ again to do so.
Hence pipes $a$ and $k$ do not cross i.e. $(a,k)\in \coInv(w^{-1})$.
If $* = A$, this contradicts that $(a,k)\in \Inv(w^{-1})$.
If $* = B$, the contradiction arises from observing that $(a,k)\in \coInv(\pi w^{-1})$ implies
both $w^{-1}(a) < w^{-1}(b) < w^{-1}(k)$ and $\pi w^{-1}(a) < \pi w^{-1}(k) < \pi w^{-1}(b)$, which gives an occurrence of a $132$-pattern in $\pi$.
The claim follows.

Since pipes $a$ and $b$ do not cross, uniqueness holds as it did for Condition 1.
This completes the proof.
\end{proof}

We may now give the proof of Proposition \ref{prop:Phi_A/B bijection}, completing the proof of Theorem \ref{thm:delta pi}.

\begin{proof}[Proof of Proposition \ref{prop:Phi_A/B bijection}]
Suppose we are given $k\in *(w,t_{ab}w)$.
Let $(Q_0,q_0) = \Phi_0^{-1}(Q)$.
Using Lemma \ref{lem:Phi_A/B inverse}, we can work backwards to form a chain \[
    (Q_m,q_m)
    \overset{\phi_{*,k}}\longrightarrow
    \cdots
    \overset{\phi_{*,k}}\longrightarrow
    (Q_0,q_0)
    \overset{\phi_{*,k}}\longrightarrow
    (Q_0,q_0) = 
    \Phi_0^{-1}(Q)
\] which must terminate (by again considering the sets $\Sigma_k(Q_i,q_i)$) at some $(Q_m,q_m)\notin \calP_0$ which is $(*,k)$-aligned and satisfies $\south_{Q_m,q_m} = k$.
Write $c = \west_{Q_m,q_m}$.
Let $(P,p)$ denote the pair obtained as follows:
\begin{itemize}
\item
If $* = A$, set $(P,p) := \swap(Q_m,q_m)$, which is well-defined since $(c,k)\in \Inv(w^{-1})$. 
Notice that $\row_p > \row_{q_m}$ and so $\south_{P,p} = c < \west_{P,p} = k$.
By Proposition \ref{prop:P_A only cares about w}, we get $(P,p)\in \calP_A$.

\item
If $* = B$, set $(P,p) := (Q_m,q_m)$.
Notice that $(\west_{P,p},\south_{P,p}) = (c,k) \in \coInv(\pi w^{-1})$ and $\row_p \leq \clambda_{\pi w^{-1}(k)} = \clambda_{\pi w^{-1}(\south_{P,p})}$ since $(P,p)$ is $(B,k)$-aligned, so $(P,p)\in \calP_B$.
\end{itemize}

It is clear that $\Phi_A(P,p) = Q$, so we obtain a map $*(w,t_{ab}w)\to \Phi_*^{-1}(Q)$ by sending $k$ to the pair $(P,p)$.
The uniqueness statement in Lemma \ref{lem:Phi_A/B inverse} together with Proposition \ref{prop:phi props}(i) shows that this procedure gives an inverse to the map $\Phi_*^{-1}(Q)\to *(w,t_{ab}w)$.
\end{proof}

\section*{Acknowledgement}
The author would like to thank Dave Anderson for his guidance on the writing and organization of this paper.
Additional thanks is given to Yibo Gao, Zachary Hamaker, and Oliver Pechenik for helpful conversations about the differential operators studied in this paper.

\nocite{*}
\bibliographystyle{plain}
\bibliography{main}

\end{document}